\theoremstyle{plain} 
\newtheorem{theorem}{\indent\sc \bf Theorem}[section] 
\newtheorem{lemma}[theorem]{\indent\sc \bf Lemma}
\newtheorem{corollary}[theorem]{\indent\sc \bf Corollary}
\newtheorem{proposition}[theorem]{\indent\sc \bf Proposition}
\theoremstyle{definition} 
\newtheorem{definition}[theorem]{\indent\sc \bf Definition}
\newtheorem{remark}[theorem]{\indent\sc \bf Remark}
\begin{document}

\title[Prolongations of regular overdetermined systems]
{On prolongations of second-order regular overdetermined systems 
with two independent and one dependent variables.}

\author[T. Noda]{Takahiro Noda}

\renewcommand{\thefootnote}{\fnsymbol{footnote}}
\footnote[0]{2010\textit{ Mathematics Subject Classification}.
Primary 58A15; Secondary 58A17.}

\keywords{regular overdetermined systems of second order, 
differential systems, 
rank $2$ prolongations, geometric singular solutions.} 

\address{
Takahiro Noda, \endgraf
Graduate School of Mathematics, 
Nagoya University, \endgraf
Chikusa-ku, Nagoya 464-8602,  
Japan.}
\email{m04031x@math.nagoya-u.ac.jp}

\maketitle


\begin{abstract}
The purpose of this present paper is to investigate the geometric structure 
of regular overdetermined systems of second order with 
two independent and one dependent variables from the point of 
view of rank 2 prolongations. 
Utilizing this notion of prolongations, we characterize 
the type of these overdetermined systems. We also give a 
systematic method for constructing the geometric singular solutions 
by analyzing of a decomposition of this prolongation. 
As an applications, 
we determine the geometric singular solutions of Cartan's 
overdetermined systems.  
\end{abstract}

\section{Introduction}
The subject of this paper is second-order 
regular overdetermined systems  
with 2 independent and 1 dependent variables. 
For these overdetermined systems, various 
pioneering works are given by many researchers 
(cf. \cite{C1}, \cite{L}, \cite{KL1}, \cite{Y1}, \cite{Y6}). 
In particular, the study of overdetermined involutive systems 
has accomplished many significant results. 
First E. Cartan \cite{C1} characterized overdetermined 
involutive systems by the condition that these admit 
a one-dimensional Cauchy characteristic system. 
He also found out the systematic method for constructing  
regular solutions of involutive systems. The 
precise definition of regular solutions is given in 
Definition \ref{solution} of this paper.  
Recently, these consideration are 
reformulated as the theory of PD-manifolds by Yamaguchi 
(cf. \cite{Y1}, \cite{Y6}). 
In addition, Kakie (cf. \cite{K1}, \cite{K2}) studied 
involutive systems 
including the existence of regular solutions in $C^{\infty}$-category 
and Cauchy problems by using the theory of characteristic systems.\par   
In this paper, we investigate 
regular overdetermined systems from the point of view of  
rank 2 prolongations. Now we introduce the notion of regular 
overdetermined systems. 
Let $J^{2}(\mathbb R^2, \mathbb R)$ be the 2-jet space: 
\begin{equation}
J^{2}(\mathbb R^2, \mathbb R):=\left\{(x,y,z,p,q,r,s,t)\right\}
\end{equation}
This space has the canonical system 
$C^2=\left\{\varpi_{0}=\varpi_{1}=\varpi_{2}=0\right\}$ given by the 
annihilators: 
\begin{equation*}
\varpi_{0}:=dz-pdx-qdy,\quad 
\varpi_{1}:=dp-rdx-sdy,\quad 
\varpi_{2}:=dq-sdx-tdy.
\end{equation*}
This jet space is also constructed geometrically as the 
Lagrange-Grassmann bundle over the standard contact 
5-dimensional manifold. For more details, see \cite{Y6}.  
On the 2-jet space, we consider overdetermined systems of the form: 
\begin{equation}\label{equation}
F(x,y,z,p,q,r,s,t)=G(x,y,z,p,q,r,s,t)=0, 
\end{equation}
where $F$ and $G$ are smooth functions on 
$J^{2}(\mathbb R^2, \mathbb R)$. 
We set 
$R=\left\{F=G=0 \right\} \subset J^2(\mathbb R^2, \mathbb R)$  
and restrict the canonical differential system $C^2$ to $R$. We denote 
it by $D(:=C^2 |_R)$. 
An overdetermined system $R$ is called {\it regular} if 
two vectors $(F_r,F_s,F_t)$ and $(G_r,G_s,G_t)$ are linearly independent 
on $R$. 
Then, $R$ is a submanifold of codimension 2, and the restriction
$\pi^{2}_{1}|_{R}:R \to J^1(\mathbb R^2, \mathbb R)$ of the natural  
projection $\pi^{2}_{1}:J^2(\mathbb R^2, \mathbb R) \to J^1(\mathbb R^2, \mathbb R)$ is a submersion. Due to the property, restricted 1-forms $\varpi_{i}|_{R}$ on $R$ are linearly independent. Hence  
$D=\left\{\varpi_{0}|_{R}=\varpi_{1}|_{R}=\varpi_{2}|_{R}=0\right\}$ 
is a rank 3 system on $R$. For brevity, 
we denote each restricted generator 1-form $\varpi_{i}|_{R}$ 
of $D$ by $\varpi_{i}$ in the following. 
Regular overdetermined systems or associated differential systems are 
classified into 4 types consisting of involutive-type, two finite-types, and 
torsion-type under the structure equations or the 
symbol algebras (see, section 3). 
We thus investigate regular overdetermined systems 
for each case by using the theory of rank 2 prolongations.
Here, the notion of the rank 2 prolongations can be regraded as 
a generalization of prolongations with the transversality condition 
and some researchers introduced this notion in each 
way (cf. \cite{KL1}, \cite{NS2}). 
In particular, this notion can be regarded as a higher-rank 
version of Cartan prolongation which is investigated  
by Montgomery and Zhitomirskii \cite{MZ}.     
The notion of  prolongations with the transversality condition 
is studied deeply and their structures are well-known 
(cf. \cite{C1}, \cite{KL2}, \cite{KL3}, \cite{Y1}, \cite{Y6}). 
On the other hand, there are unexplored territories 
for these rank 2 prolongations.   
Thus, one of our main motivation in this paper is to reveal the difference 
between prolongations with the transversality condition and 
rank 2 prolongations. For more details, see section 3.  
\par  
The paper is organized as follows:  
In section 2, we prepare some terminology and notation 
for the study of differential systems. 
In section 3, we define the rank 2 prolongations of differential systems 
and determine the topology of fibers of regular overdetermined 
systems in terms of this notion 
({\bf Theorem \ref{characterization-prolongation}}). 
As a direct consequence followed by this characterization, 
we obtain the specific difference between prolongations with the transversality condition and rank 2 prolongations ({\bf Corollary \ref{prolongation-difference}}). 
From this characterization, we get a new point of view of  
the geometric structure of overdetermined systems.  
In section 4, we study the structures of the 
canonical systems $\hat D$ on the rank 2 prolongations 
$\Sigma(R)$ of (locally) involutive systems. More precisely, we  
clarify the structure of nilpotent graded Lie algebras (symbol algebras) 
of the canonical systems on the rank 2 prolongations by using  
some decomposition ({\bf Proposition \ref{symbol}}). 
Here, it is well-known that 
the symbol algebras are fundamental invariants of 
differential systems or filtered manifolds.       
We also have the tower structure of these involutive systems 
by the successive prolongations (see {\bf Theorem \ref{tower}}).  
In section 5, we provide two systematic approach to 
construct the geometric singular solutions of involutive systems. 
Moreover, we apply these methods to 
Cartan's overdetermined system. For this system, E. Cartan \cite{C1} 
already gave the explicit integral representation 
of the regular solutions. On the other hand, the explicit description of 
singular solutions has not been given yet. Thus,  
we give the explicit integral representation of  
geometric singular solutions of this system.\par       

\section{Regularity and Symbol algebra of differential systems}
In this section, we prepare some terminology and notation for the study of differential systems. 
For more details, we refer the reader to \cite{T1} and \cite{Y3}.    

\subsection{Derived system, Weak derived system} 
Let $D$ be a differential system on a manifold $R$. 
We denote by $\mathcal D=\Gamma(D)$ the sheaf of sections to $D$. 
The derived system $\partial D$ of a differential system $D$ is defined, 
in terms of sections, by 
$\partial \mathcal D:=\mathcal D+[\mathcal D,\mathcal D].$ 
In general, $\partial D$ is obtained as a subsheaf of the tangent sheaf of $R$. 
Moreover, higher derived systems $\partial^{k} D$ are defined 
successively by 
$\partial^{k} \mathcal D:=\partial (\partial^{k-1} \mathcal D),$ 
where we set $\partial^{0} D=D$ by convention. On the other hand, 
the $k$-th weak derived systems 
$\partial^{(k)} D$ of $D$ are defined inductively by 
$\partial^{(k)} \mathcal D:=\partial^{(k-1)}\mathcal D
                             +[\mathcal D,\partial^{(k-1)}\mathcal D].$ 

\begin{definition}
A differential system $D$ is called regular (resp. weakly regular) 
if $\partial^{k} D$ 
(resp. $\partial^{(k)} D$) is a subbundle for each $k$. 
\end{definition}

If $D$ is not weakly regular around $x\in R$, then $x$ is called 
a {\it singular point} in the sense 
of Tanaka theory. These derived systems are also interpreted by using annihilators as follows \cite{Shi}:  
Let $D=\{\varpi_1=\cdots =\varpi_s=0 \}$ be a differential system on $R$. 
We denote by $D^{\perp}$ the annihilator 
subbundle of $D$ in $T^* R$, that is, 
\begin{eqnarray*}
D^{\perp}(x) &:=& 
\{\omega\in T_{x}^{*} R\ |\ \omega(X)=0\ \text{for any}\ X \in D(x)
 \}, \\
 &=& <\varpi_1, \cdots ,\varpi_s >.
\end{eqnarray*}
Then the annihilator $(\partial D)^{\perp}$ of the first derived system of $D$ 
is given by 
\[
(\partial D)^{\perp}=\{\varpi \in D^{\perp}\ |\ d\varpi \equiv 0\ 
(\bmod\ D^{\perp})\}.
\]
Moreover the annihilator $(\partial^{(k+1)} D)^{\perp}$ of the $(k+1)$-th weak
derived system of $D$ is given by  
\begin{eqnarray*}
(\partial^{(k+1)} D)^{\perp} &=& \{\varpi \in (\partial^{(k)} D)^{\perp}\ |\ 
d\varpi \equiv 0\ (\bmod\ (\partial^{(k)} D)^{\perp}, \\
& & \hspace{3cm} (\partial^{(p)} D)^{\perp}\wedge (\partial^{(q)} D)^{\perp},\ 2 \le
p,q \le k-1)\}.
\end{eqnarray*}
We set $D^{-1}:=D,\ D^{-k}:=\partial ^{(k-1)} D$ ($k\geq 2$), for 
a weakly regular differential system $D$. Then we have 
(\cite[Proposition 1.1]{T1}) 
\begin{enumerate} 
\item [(T1)] There exists a unique positive integer $\mu$ such that 
$$D^{-1}\subset D^{-2}\subset \cdot\cdot\cdot \subset D^{-k} \subset \cdot\cdot\cdot 
\subset D^{-(\mu-1)} \subset D^{-\mu}=D^{-(\mu+1)}=\cdot\cdot\cdot$$
\item[(T2)] $[\mathcal D^{p}, \mathcal D^{q}]\subset \mathcal D^{p+q}$\quad \quad  
for\ all\ $p,\ q <0$. 
\end{enumerate}

\subsection{Symbol algebra of differential system}
Let $(R, D)$ be a weakly regular 
differential system such that 
$$TR=D^{-\mu}\supset D^{-(\mu-1)}\supset \cdot\cdot\cdot \supset
D^{-1}=:D.$$
For all $x\in R$, we set $\mathfrak g_{-1}(x):=D^{-1}(x)=D(x),\    
\mathfrak g_{p}(x):=D^{p}(x)/D^{p+1}(x),\ (p=-2,-3, \ldots, -\mu)$ and 
$$\mathfrak m(x):=\bigoplus_{p=-1}^{-\mu} \mathfrak g_{p}(x).$$
Then dim $\mathfrak m(x)=$ dim $R$ holds.  
We set $\mathfrak g_{p}(x)=\left\{0\right\}$ when $p\leq-\mu-1$. 
For $X\in \mathfrak g_{p}(x),\ Y\in \mathfrak g_{q}(x)$, the Lie bracket 
$[X, Y]\in\mathfrak g_{p+q}(x)$ is defined as follows: 
Let $\tilde X\in \mathcal D^{p},\ \tilde Y\in \mathcal D^{q}$ 
be extensions 
($\tilde X_{x}=X,\ \tilde Y_{x}=Y$). Then $[\tilde X, \tilde Y]\in \mathcal D^{p+q}$,  
and we set $[X,Y]:=[\tilde X, \tilde Y]_{x}\in \mathfrak g_{p+q}(x)$. 
It does not depend on the choice of the extensions  
because of the equation
\[
[f \tilde{X}, g \tilde{Y}] =fg[\tilde{X},\tilde{Y}] 
+f(\tilde{X} g) \tilde{Y} -g(\tilde{Y} f)\tilde{X}\quad 
(f,g \in C^{\infty}(R)).
\]
 The Lie algebra 
$\mathfrak m(x)$ is a nilpotent graded Lie algebra. 
We call $(\mathfrak m(x),\ [\ ,\ ])$ 
the {\it symbol algebra} of $(R, D)$ at $x$. 
Note that the symbol algebra $(\mathfrak m(x),\ [\ ,\ ])$ satisfies the generating conditions 
$$
[\mathfrak g_{p}, \mathfrak g_{-1}]=\mathfrak g_{p-1}\ \ (p<0). 
$$ 
Later, Morimoto \cite{M} introduced the notion of a filtered manifold as generalization 
of the weakly regular differential system. 
 
We define a filtered manifold $(R,F)$ by a pair of a manifold $R$ 
and a tangential filtration $F$. Here, a tangential filtration 
$F$ on $R$ is a sequence $\left\{F^{p}\right\}_{p<0}$ of subbundles of 
the tangent bundle $TR$ and the following conditions are satisfied: 
\begin{enumerate} 
\item [(M1)] $TR=F^{k}=\cdot\cdot\cdot =F^{-\mu}\supset \cdot\cdot\cdot \supset 
F^{p}\supset F^{p+1}\supset \cdot\cdot\cdot \supset F^{0}=\left\{0\right\},$
\item[(M2)] $[\mathcal F^{p}, \mathcal F^{q}]\subset \mathcal F^{p+q}$\quad \quad  
for\ all\ $p,\ q<0$,   
\end{enumerate}
where $\mathcal F^{p}=\Gamma(F^{p})$ is the set of sections of $F^{p}$. 
Let $(R,F)$  be a filtered manifold. For $x\in R$ we set 
$\mathfrak f_{p}(x):=F^{p}(x)/F^{p+1}(x)$,  
and 
$$\mathfrak f(x):=\bigoplus_{p<0} \mathfrak f_{p}(x).$$
For $X \in \mathfrak f_{p}(x),\ Y \in \mathfrak f_{q}(x)$, 
the Lie bracket $[X,Y]\in \mathfrak f_{p+q}(x)$ is defined as follows: 
Let $\tilde X\in \mathcal F^{p},\ \tilde Y\in \mathcal F^{q}$ be extensions 
($\tilde X_{x}=X,\ \tilde Y_{x}=Y$). Then $[\tilde X, \tilde Y]\in \mathcal F^{p+q}$,  
and we set $[X,Y]:=[\tilde X, \tilde Y]_{x}\in \mathfrak f_{p+q}(x)$. 
It does not depend on the choice of the extensions. 
The Lie algebra 
$\mathfrak f(x)$ is also a nilpotent graded Lie algebra. 
We call $(\mathfrak f(x),\ [\ ,\ ])$ the {\it symbol algebra} of 
$(R, F)$ at $x$. 
In general it does not satisfy the generating conditions.

\section{Rank $2$ prolongations of regular overdetermined systems} 
In this section, we provide the rank $2$ prolongations for regular overdetermined systems of  
second-order of codimension 2 with $2$ independent and $1$ dependent variables. First, we introduce the notion of the rank 2 prolongations 
of differential systems, in general. 
\begin{definition}\label{rank 2-prolongation}
Let $(R,D)$ be a differential system. Then, the {\it rank $2$ prolongation} 
of $(R,D)$ is defined by 
\begin{equation}\label{prolongation}
\Sigma(R):=\bigcup_{x\in R}\Sigma_{x}, 
\end{equation}
where $\Sigma_x=\left\{v \subset T_{x}R\ |\ v\ {\rm is\ a}\ 2{\rm \textit{-}dim.\ integral\ element\ of\ } 
D(x)\quad ({\rm i.e.}\ d\varpi_{i}|_{v}=0)\ \right\}.$ 
Let $p:\Sigma(R)\to R$ be the projection. We define the canonical system 
$\hat D$ on $\Sigma(R)$ by 
\begin{align}\label{canonical system}
\hat D(u):&={p_{*}^{-1}}(u),\\ 
          &=\left\{v\in T_{u}(\Sigma(R))\ |\ p_{*}(v)\in u\right\}, \nonumber
\end{align}
where $u\in \Sigma(R).$ 
\end{definition}
This space $\Sigma(R)$ is a subset of the following Grassmann bundle over $R$
\begin{equation}\label{Grassmann}
J(D, 2):=\bigcup_{x\in R} J_x
\end{equation}
where 
$J_x:=\left\{v \subset T_{x}R\ |\ v\ {\rm is\ a}\ 2{\rm \textit{-}dim.\ subspace\ of}\ D(x)\right\}.$  
In general, the rank $2$ prolongations $\Sigma(R)$ have singular points, that is $\Sigma(R)$ is not smooth. 
In fact, for prolongations, there also exists the notion of 
{\it prolongations with the transversality condition} : 
\begin{equation}\label{prolongation-independence}
R^{(1)}=\bigcup_{x\in R}\ R^{(1)}_{x},
\end{equation}
where 
$R^{(1)}_x=\left\{2{\rm -dim.\ integral\ elements\ 
of}\ D(x),\ {\rm transversal\ to}\  Ker {(\pi^{2}_{1}|_{R}})_{*}  \right\}.$ 
For this notion, it is well-known the geometric structures by many 
workers (\cite{BCG3}, \cite{C1}, \cite{Y1}). 
In this paper, we examine the prolongations except for this 
transversality condition.\par
Next, we explain a classification of the type of 
overdetermined systems under the structure equations or 
the corresponding symbols. 
Let $(R,D)$ be a regular overdetermined system. 
If $(R,D)$ does not have torsion, that is,  
the prolongation $R^{(1)}$ is onto,  
then the structure equation of this system is one of 
the following three cases 
(\cite[the case of codim $\mathfrak f=2$ of Case $n=2$ in 
p. 346--347]{Y6}): 
\begin{enumerate}
\item[(I)] There exists a coframe 
$\left\{\varpi_0, \varpi_1,\varpi_2,\omega_1,\omega_2, 
\pi\right\}$ around $w\in R$ such that   
$D=\left\{\varpi_0=\varpi_1=\varpi_2=0\right\}$ 
and the following structure equation holds at $w$: 
\begin{align}
d\varpi_{0}&\equiv \omega_1 \wedge \varpi_1+\omega_2 \wedge \varpi_2\quad \quad  
{\rm mod\ } \varpi_0, \nonumber \\
d\varpi_{1}&\equiv \ 0 \quad \quad \quad \quad \quad \quad \quad  
{\quad \quad \rm mod\ } 
\varpi_{0}, \varpi_{1}, \varpi_{2}, \label{involutive-str-equ} \\ 
d\varpi_{2}&\equiv \quad \quad \quad \quad \quad \omega_2 \wedge \pi  
{\quad \quad \rm mod\ } 
\varpi_{0}, \varpi_{1}, \varpi_{2}. \nonumber
\end{align}
\item[(II)] There exists a coframe 
$\left\{\varpi_0, \varpi_1,\varpi_2,\omega_1,\omega_2, 
\pi \right\}$ around $w\in R$ such that   
$D=\left\{\varpi_0=\varpi_1=\varpi_2=0\right\}$
and the following structure equation holds at $w$:   
\begin{align}
d\varpi_{0}&\equiv \omega_{1}\wedge \varpi_{1}+\omega_{2}\wedge \varpi_{2}
\quad \quad {\rm mod}\ \varpi_{0}, \nonumber \\
d\varpi_{1}&\equiv \quad \quad \quad \quad \quad \omega_{2}\wedge \pi 
{\quad \quad \rm mod\ } 
\varpi_{0}, \varpi_{1}, \varpi_{2},\label{par1-str-equ}\\ 
d\varpi_{2}&\equiv \omega_{1}\wedge \pi \quad \quad \quad
{\quad \quad \quad \quad \rm mod\ } 
\varpi_{0}, \varpi_{1}, \varpi_{2}. \nonumber
\end{align}
\item[(III)] There exists a coframe 
$\left\{\varpi_0, \varpi_1,\varpi_2,\omega_1,\omega_2, 
\pi \right\}$ around $w\in R$ such that   
$D=\left\{\varpi_0=\varpi_1=\varpi_2=0\right\}$ 
and the following structure equation holds at $w$: 
\begin{align}
d\varpi_{0}&\equiv \omega_{1}\wedge \varpi_{1}+\omega_{2}\wedge \varpi_{2}
\quad \quad {\rm mod}\ \varpi_{0}, \nonumber \\
d\varpi_{1}&\equiv \omega_{1}\wedge \pi \ \quad \quad \quad \quad \quad  
{\quad \quad \rm mod\ } 
\varpi_{0}, \varpi_{1}, \varpi_{2}, \label{par2-str-equ}\\ 
d\varpi_{2}&\equiv \quad \quad \quad \quad \quad 
 \omega_{2}\wedge \pi 
{\quad \quad \ \rm mod\ } 
\varpi_{0}, \varpi_{1}, \varpi_{2}, \nonumber
\end{align}
\end{enumerate}
Now we consider the case where torsion exists, that is,  
$R^{(1)}$ is not onto.  
In fact, then the structure equation (or the symbol) of torsion type 
has the normal form by the obtained result in \cite{NSY}.  
This fact also follows from the technique of the proof of  
\cite[Theorem 3.3]{NS1}.    
Namely, if $(R,D)$ has torsion at $w\in R$, 
we have the following structure equation at $w$.\par   
(IV) There exists a coframe 
$\left\{\varpi_0, \varpi_1,\varpi_2,\omega_1,\omega_2, 
\pi \right\}$ around $w\in R$ such that   
$D=\left\{\varpi_0=\varpi_1=\varpi_2=0\right\}$ 
and the following structure equation holds at $w$:
\begin{align}
d\varpi_{0}&\equiv \omega_1 \wedge \varpi_1+\omega_2 \wedge \varpi_2\quad \quad  
{\rm mod\ } \varpi_0, \nonumber \\
d\varpi_{1}&\equiv \omega_1 \wedge \omega_2 \quad \quad \quad \quad \ \ 
{\quad \quad \rm mod\ } 
\varpi_{0}, \varpi_{1}, \varpi_{2}, \label{torsion-str-equ}\\ 
d\varpi_{2}&\equiv \quad \quad \quad \quad \quad \omega_2 \wedge \pi  
{\quad \quad \rm mod\ } 
\varpi_{0}, \varpi_{1}, \varpi_{2}. \nonumber
\end{align}
Here the types of (I), (II), (III) and (IV) correspond to 
differential systems of involutive type, two finite types,
and torsion type respectively (cf. \cite{Y5}, \cite{Y6}). 
From now on, We often call these systems the 
differential systems of type ($k$), where $k=$ I, II, III, IV.\par 
\begin{remark}
The structures of prolongations $R^{(1)}$ with the 
transversality condition for  
$R$ of types of  (I), (II), (III) and (IV) are known. Indeed,  
$R^{(1)}\to R$ is a $\mathbb R$-bundle for the type of  (I). 
Moreover $R^{(1)}$ is diffeomorphic to $R$ for types of (II) or (III), and 
the set $R^{(1)}$ is empty for types of (IV). 
\end{remark}     
One of the main purpose of this section is to 
clarify the difference between $R^{(1)}$ and 
$\Sigma(R)$. First, we consider the case of type (I). 
\begin{lemma}\label{involutive-prolongation}
Let $(R,D)$ be a differential system of type $(I)$ with $2$ independent 
and $1$ dependent variables. 
Then the rank $2$ prolongation $\Sigma(R)$ is a 
smooth submanifold of $J(D,2)$. 
Moreover, it is a $S^1$-bundle over $R$.  
\end{lemma}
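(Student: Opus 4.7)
The plan is to use the structure equation (\ref{involutive-str-equ}) to reduce the integrability conditions defining $\Sigma_x$ to a single non-trivial equation on the Grassmannian of $2$-planes in $D(x)$, and to recognize the resulting locus as a projective line.

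First, I would fix a local coframe $\{\varpi_0,\varpi_1,\varpi_2,\omega_1,\omega_2,\pi\}$ around any $w\in R$ satisfying (\ref{involutive-str-equ}), and pick the dual frame $\{e_1,e_2,e_3\}$ of $D(x)$ corresponding to $\{\omega_1,\omega_2,\pi\}|_{D(x)}$. For a $2$-plane $v\subset D(x)$ every $\varpi_i$ vanishes on $v$, so any term of $d\varpi_i$ containing some $\varpi_j$ restricts to zero. Substituting (\ref{involutive-str-equ}) then yields $d\varpi_0|_v=0$ and $d\varpi_1|_v=0$ automatically, while the condition $d\varpi_2|_v=0$ reduces to the single equation $(\omega_2\wedge\pi)|_v=0$.

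The second step is a geometric interpretation. The $2$-form $(\omega_2\wedge\pi)|_{D(x)}$ is a non-zero decomposable $2$-form on the $3$-dimensional vector space $D(x)$, hence has a $1$-dimensional kernel $\ell(x):=D(x)\cap\ker\omega_2\cap\ker\pi=\langle e_1\rangle$; a direct bilinear computation shows that $(\omega_2\wedge\pi)|_v=0$ if and only if $\ell(x)\subset v$. Therefore $\Sigma_x$ is precisely the set of $2$-planes in $D(x)$ containing the fixed line $\ell(x)$, which is canonically identified with the projective line $\mathbb{P}(D(x)/\ell(x))\cong\mathbb{RP}^1\cong S^1$.

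Finally, I would promote this fibrewise picture to the smooth bundle structure by working in standard affine charts of the Grassmann bundle $J(D,2)\to R$. Parametrizing $v\subset D(x)$ as $\ker(a_1\omega_1+a_2\omega_2+a_3\pi)$, the condition $(\omega_2\wedge\pi)|_v=0$ becomes the single linear relation $a_1=0$ in the charts $a_2\neq 0$ and $a_3\neq 0$, and is never satisfied when $a_1\neq 0$. This defining equation is transverse and varies smoothly with $x$ together with the coframe, so $\Sigma(R)$ is a smooth codimension-$1$ submanifold of $J(D,2)$ and the projection $\Sigma(R)\to R$ is a locally trivial smooth $S^1$-bundle. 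The main conceptual point, rather than a real obstacle, is the extraction of this single integrability condition from (\ref{involutive-str-equ}) and its identification with containment of the Cauchy-type line $\ell(x)$; once this is done, both the fibre topology and smoothness follow from the standard sub-Grassmannian description inside $\mathrm{Gr}(2,D(x))\cong\mathbb{RP}^2$.
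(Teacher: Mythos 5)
Your proposal is correct, and the underlying computation is the one the paper performs: restricting the type (I) structure equation to a $2$-plane $v\subset D(x)$ kills every term containing a $\varpi_j$, so $d\varpi_0|_v=d\varpi_1|_v=0$ hold automatically and the only condition is $(\omega_2\wedge\pi)|_v=0$. Where you diverge is in how you extract the $S^1$ from that condition. The paper works chart by chart in $J(D,2)$: in $U_{\omega_1\omega_2}$ the condition becomes $p^1_1=0$, in $U_{\omega_1\pi}$ it is again a regular equation, in $U_{\omega_2\pi}$ it is never satisfied, and the fiber topology is then obtained by gluing the two surviving affine charts via the explicit transition $b=1/a$, recognized as a projective transformation of $\mathbb{RP}^1$. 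You instead observe that $(\omega_2\wedge\pi)|_{D(x)}$ is a decomposable $2$-form with one-dimensional kernel $\ell(x)=\ker\omega_2\cap\ker\pi\cap D(x)$, and that $(\omega_2\wedge\pi)|_v=0$ exactly when $\ell(x)\subset v$; hence $\Sigma_x=\mathbb{P}\bigl(D(x)/\ell(x)\bigr)\cong\mathbb{RP}^1$ at once, and smoothness plus local triviality follow because $\ell$ is a smooth line field. This is a cleaner and more invariant route: it identifies the fiber without a gluing argument, and it makes visible that $\ell(x)$ is precisely the Cauchy characteristic direction $Ch(D)(x)=\{\varpi_0=\varpi_1=\varpi_2=\omega_2=\pi=0\}$ that the paper only introduces later in Section 5 --- so your $\Sigma(R)$ is intrinsically the pencil of integral $2$-planes through $Ch(D)$, i.e.\ the projectivization of the rank $2$ bundle $D/Ch(D)$. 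What the paper's chart computation buys in exchange is the explicit fiber coordinates $a$, $b$ and the identification of $\Sigma_1=\{b=0\}$ with the points at infinity, which are used heavily in the later sections on symbol algebras and singular solutions; if you adopt your argument you would still want to record those coordinates.
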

\begin{proof}
Let $\pi:J(D,2)\to R$ be the projection and $U$ an open set in $R$. 
Then $\pi^{-1}(U)$ is covered by 3 open sets in $J(D,2)$, that is,   
\begin{equation}\label{Grassmann-cover}
\pi^{-1}(U)=U_{\omega_{1}\omega_{2}}\cup U_{\omega_{1}\pi}
\cup U_{\omega_{2}\pi},
\end{equation}
where  
\begin{align*} 
&U_{\omega_{1}\omega_{2}}:=\left\{v \in \pi^{-1}(U) \ |\ \omega_{1}|_{v}\wedge \omega_{2}|_{v} \not=0\right\},\ 
U_{\omega_{1}\pi}:=\left\{v \in \pi^{-1}(U) \ |\ \omega_{1}|_{v}\wedge \pi|_{v} \not=0\right\},\\
&U_{\omega_{2}\pi}:=\left\{v \in \pi^{-1}(U) \ |\ \omega_{2}|_{v}\wedge \pi|_{v} \not=0\right\}. 
\end{align*}
We explicitly describe the defining equation of $\Sigma(R)$ 
in terms of the inhomogeneous Grassmann coordinate of fibers in 
$U_{\omega_{1}\omega_{2}}, U_{\omega_{1}\pi}, U_{\omega_{2}\pi}$.  
First we consider it on $U_{\omega_{1}\omega_{2}}$. 
For $w\in U_{\omega_{1}\omega_{2}}$, $w$ is a $2$-dimensional subspace of $D(v)$, where $p(w)=v$. Hence, by restricting $\pi$ to $w$, 
we can introduce the 
inhomogeneous coordinate $p^{1}_{i}$ $(i=1, 2)$ 
of fibers of $J(D,2)$ around $w$ with 
$\pi|_{w}={p^{1}_{1}}(w)\omega_{1}|_{w}+{p^{1}_{2}}(w)\omega_{2}|_{w}$. 
Moreover, $w$ satisfies $d\varpi_{2}|_{w}\equiv 0$
in (\ref{involutive-str-equ}). Hence, we show that 
\begin{align*}
d\varpi_{2}|_{w}\equiv \omega_{2}|_{w}\wedge \pi|_{w}
                &\equiv {p_{1}^{1}}(w)\omega_{2}|_{w}\wedge\omega_{1}|_{w}.
\end{align*} 
Thus we obtain the defining equations $f=0$ of $\Sigma(R)$ in $U_{\omega_{1}\omega_{2}}$ 
of $J(D,2)$, where $f=p_{1}^{1}$, that is, $\left\{f=0 \right\}\subset U_{\omega_{1}\omega_{2}}.$ 
Then $df$ does not vanish on $\left\{f=0\right\}$. 
In the same way, on $U_{{\omega}_{1}\pi}$, $df$ does not vanish on $\Sigma(R)$. 
Finally we consider on $U_{\omega_{2}\pi}$.  
Then an element $w\in U_{\omega_{2}\pi}$ is a 
$2$-dimensional subspace of $D(v)$, 
where $p(w)=v$. Hence, by restricting $\omega_1$ to $w$, 
we can introduce the 
inhomogeneous coordinate $p^{3}_{i}$ (i=1, 2) 
of fibers of $J(D,2)$ around $w$ with 
$\omega_{1}|_w={p^{3}_{1}}(w)\omega_{2}|_{w}+{p^{3}_{2}}(w)\pi|_{w}$. 
Moreover, $w$ satisfies $d\varpi_{2}|_{w}\equiv 0$. 
However we have 
$$ 
d\varpi_{2}|_{w}\equiv \omega_{2}|_{w} \wedge \pi|_{w}\not=0. 
$$  
Thus, there does not exist an integral element, 
that is, $U_{\omega_{2}\pi}\cap p^{-1}(U)=\emptyset$. 
Therefore, we conclude $\Sigma(R)$ is a 
submanifold in $J(D,2)$.\par 
Next, we show that the topology of its fibers is $S^1$. 
For any open set $U\subset R$, we obtain the covering      
\begin{equation}\label{rank2-cover}
p^{-1}(U)=U_{\omega_{1}\omega_{2}}\cup U_{\omega_{1}\pi}. 
\end{equation}
Then the canonical system $\hat D$ of rank $3$ is given by   
$$
\hat D=\left\{\varpi_{0}=\varpi_{1}=\varpi_{2}=\varpi_{\pi}=0\right\}\quad \text{on $U_{\omega_{1}\omega_{2}}$}, 
$$
where $\varpi_{\pi}=\pi-a \omega_{2}$ and 
$a$ is fiber coordinate and 
$$
\hat D=\left\{\varpi_{0}=\varpi_{1}=\varpi_{2}=\varpi_{\omega_{2}}=0\right\}\quad \text{on $U_{\omega_{1}\pi}$}, 
$$
where $\varpi_{\omega_{2}}=\omega_{2}-b \pi$ and $b$ is a fiber coordinate.   
To prove the statement, we consider the gluing of $(\Sigma(R), \hat D)$ . 
Let $w\in p^{-1}(U)$ be a point in $U_{\omega_{1}\pi}\subset p^{-1}(U)$. 
Here, if $w\not \in U_{\omega_{1}\omega_{2}}$, then we have 
$b=0$ because of the condition $\omega_{1}\wedge \omega_{2}=0$. 
Thus we show that $b\not= 0$ on $U_{\omega_{1}\omega_{2}}\cap U_{\omega_{1}\pi}$  
and $b=0$ on $U_{\omega_{1}\pi} \backslash U_{\omega_{1}\omega_{2}}$. 
Hence we can prove that 
the topology of fibers is $S^1$. Indeed, we obtain the  
transition function $\phi$ of $(\Sigma(R), \hat D)$ 
on $U_{\omega_{1}\omega_{2}}\cap U_{\omega_{1}\pi}$ defined by  
\begin{equation*}
\phi (v, a)=\left(v, b:=\frac{1}{a} \right)\quad \ {\rm for}\ a\not= 0,
\end{equation*}
where $v$ is a local coordinate on $R$. 
This map $\phi$ satisfies the condition $\phi_{*}\hat D=\hat D$ on 
$U_{\omega_{1}\omega_{2}}\cap U_{\omega_{1}\pi}$. 
Thus $\phi$ is the  projective transformation of 
$\mathbb {RP}^{1}\cong S^1$. 
Note that each fiber in codimension 1 submanifold 
$\left\{b=0\right\}\subset U_{\omega_{1}\pi}$ corresponds to 
the point at infinity of $\mathbb {RP}^{1}$.   
\end{proof}
Next, we consider the case of type (II).
\begin{lemma}\label{par1-prolongation}
Let $(R,D)$ be a differential system of type $(${\rm II}$)$ 
with $2$ independent and $1$ dependent 
variables. Then the rank $2$ prolongation $\Sigma(R)$ is 
diffeomorphic to $R$.    
\end{lemma}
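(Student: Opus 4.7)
The plan is to mimic the chart-by-chart analysis of Lemma \ref{involutive-prolongation}, replacing the involutive structure equations by those of type (II) in (\ref{par1-str-equ}). Fix an open $U\subset R$ and cover $\pi^{-1}(U)\subset J(D,2)$ by the same three Grassmann charts $U_{\omega_1\omega_2}$, $U_{\omega_1\pi}$, $U_{\omega_2\pi}$. The essential new feature, compared with type (I), is that \emph{both} $d\varpi_1$ and $d\varpi_2$ now carry a $\pi$-term, so a $2$-plane $w\subset D(v)$ being an integral element imposes two independent conditions instead of one; this is what will collapse the $S^1$-fiber of the type (I) case to a single point.

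First I would work in $U_{\omega_1\omega_2}$, parametrizing the fiber by $\pi|_w = a\,\omega_1|_w + b\,\omega_2|_w$. Since $\varpi_i|_w = 0$, (\ref{par1-str-equ}) gives the integrability conditions
\[
d\varpi_1|_w \equiv -a\,\omega_1|_w\wedge\omega_2|_w = 0, \qquad d\varpi_2|_w \equiv b\,\omega_1|_w\wedge\omega_2|_w = 0,
\]
and, since $\omega_1|_w\wedge\omega_2|_w\neq 0$ in this chart, both $a$ and $b$ must vanish. Thus $\Sigma(R)\cap U_{\omega_1\omega_2}$ is cut out transversally as the smooth section $\{a=b=0\}$ and projects diffeomorphically onto $U$.

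Next I would rule out the remaining two charts. On $U_{\omega_1\pi}$, writing $\omega_2|_w = a\,\omega_1|_w + b\,\pi|_w$, the integrability relation $d\varpi_2|_w \equiv \omega_1|_w\wedge\pi|_w$ cannot vanish, since $\omega_1|_w\wedge\pi|_w\neq 0$ is precisely the defining condition of this chart; hence no integral element lies there. The symmetric argument on $U_{\omega_2\pi}$, using $d\varpi_1|_w \equiv \omega_2|_w\wedge\pi|_w \neq 0$, disposes of that chart. Consequently $\Sigma(R)\subset U_{\omega_1\omega_2}$.

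Combining these three steps, the natural projection $p\colon\Sigma(R)\to R$ is realized globally as the graph of a smooth section of $J(D,2)\to R$, hence is a diffeomorphism. I do not expect a genuine obstacle: the argument is bookkeeping over three charts, and the rigidity of the answer is built into the structure equations themselves, because the pair of nontrivial $\pi$-torsion terms in $d\varpi_1$ and $d\varpi_2$ together pin down the unique candidate integral plane $\{\pi|_w = 0\}\cap D(v)$ at every point.
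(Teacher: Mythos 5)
Your proposal is correct and follows essentially the same route as the paper: the same three Grassmann charts, the same computation showing both fiber coordinates must vanish on $U_{\omega_1\omega_2}$, and the same nonvanishing of $d\varpi_2$ (resp. $d\varpi_1$) ruling out $U_{\omega_1\pi}$ (resp. $U_{\omega_2\pi}$), so that $\Sigma(R)$ is the graph of a section of $J(D,2)\to R$.
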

\begin{remark}\label{remark-par1-prolongation}
We emphasize that $(R,D)$ and $(\Sigma(R), \hat D)$ are different 
as differential systems. Indeed $D$ is a rank $3$ differential system on $R$, 
but $\hat D$ is a rank $2$ differential system on $\Sigma(R)$. 
\end{remark}
\begin{proof}
In this situation we also use the covering  (\ref{Grassmann-cover}) of $\pi^{-1}(U)$ 
for the Grassmann bundle $J(D,2)$ and explicitly  
describe the defining equation of $\Sigma(R)$ 
in terms of the inhomogeneous Grassmann coordinate of fibers in 
$U_{\omega_{1}\omega_{2}}, U_{\omega_{1}\pi}, U_{\omega_{2}\pi }$. 
First we consider it on $U_{\omega_{1}\omega_{2}}$. 
For $w\in U_{\omega_{1}\omega_{2}}$, $w$ is a $2$-dimensional subspace of $D(v)$, where $p(w)=v$. 
Hence, by restricting $\pi$ to $w$, we can introduce the 
inhomogeneous coordinate $p^{1}_{i}$ of fibers of 
$J(D,2)$ around $w$ with $\pi |_{w}={p^{1}_{1}}(w)\omega_{1}|_{w}+{p^{1}_{2}}(w)\omega_{2}|_{w}$. 
Moreover $w$ satisfies 
$d\varpi_{1}|_{w}\equiv d\varpi_{2}|_{w}\equiv 0$
in (\ref{par1-str-equ}). Thus we get  
\begin{align*}
d\varpi_{1}|_{w}&\equiv \omega_{2}|_{w}\wedge \pi|_{w} 
                \equiv p^{1}_{1}(w)\omega_{2}|_{w}\wedge\omega_{1}|_{w},\\
d\varpi_{2}|_{w}&\equiv \omega_{1}|_{w}\wedge \pi|_{w}
                \equiv p^{1}_{2}(w)\omega_{1}|_{w}\wedge\omega_{2}|_{w}. 
\end{align*} 
In this way, we obtain the defining equations $f=g=0$ of $\Sigma(R)$ in  $U_{\omega_{1}\omega_{2}}$ of $J(D,2)$, where 
$f=p_{1}^{1}, g=p_{2}^{1}$. Hence we have one trivial integral element. 
Next we consider on $U_{\omega_{1}\pi}$. 
In the same way, by restricting $\omega_{2}$ to $w$, 
we can introduce the  
inhomogeneous coordinate $p^{2}_{i}$ of fibers of $J(D,2)$ 
around $w$ with $\omega_{2}|_{w}={p^{2}_{1}}(w)\omega_{1}|_{w}+{p^{2}_{2}}(w)
\pi |_{w}$.  
Moreover $w$ satisfies 
$d\varpi_{1}|_{w}\equiv d\varpi_{2}|_{w}\equiv 0$. However we have  
$d\varpi_{2}|_{w} \equiv \omega_{1}|_{w}\wedge \pi |_{w} \not = 0.$  
Hence there does not exist an integral element. 
Finally we consider on $U_{\omega_{2}\pi }$. 
In this situation, by restricting $\omega_1$ to $w$, we can also 
introduce the inhomogeneous coordinate $p^{3}_{i}$ of fibers of $J(D,2)$ around $w$ with 
$\omega_{1}|_w={p^{3}_{1}}(w)\omega_{2}|_{w}+{p^{3}_{2}}(w)\pi |_{w}.$  
Moreover $w$ satisfies 
$d\varpi_{1}|_{w}\equiv d\varpi_{2}|_{w}\equiv 0$. 
However, we have $d\varpi_{1}|_{w}\equiv \omega_{2}|_{w} \wedge \pi |_{w} \not=0.$     
Hence there does not exist an integral element. 
Therefore, $\Sigma(R)$ is a section of the 
Grassmann bundle $J(D,2)$ over $R$.   
\end{proof}

Next we consider the case of type (III). 

\begin{lemma}\label{par2-prolongation}
Let $(R,D)$ be a differential system of type $(${\rm III}$)$ 
with $2$ independent and $1$ dependent variables. 
Then the rank $2$ prolongation $\Sigma(R)$ is equal to $R$.   
\end{lemma}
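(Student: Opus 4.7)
The strategy is to mirror the chart-by-chart analysis used in the proof of Lemma \ref{par1-prolongation}, now applied to the type (III) structure equations (\ref{par2-str-equ}). For an open set $U \subset R$, I cover $\pi^{-1}(U) \subset J(D,2)$ by the three inhomogeneous Grassmann charts $U_{\omega_1\omega_2}$, $U_{\omega_1\pi}$ and $U_{\omega_2\pi}$, and on each chart write down the defining equations of $\Sigma(R)$ coming from $d\varpi_i|_w = 0$ for $i=0,1,2$. For any $w \in J(D,2)$ with $p(w)=v$ we have $w \subset D(v)$, hence $\varpi_1|_w = \varpi_2|_w = 0$ and the condition from $d\varpi_0$ is automatic by the first line of (\ref{par2-str-equ}); only $d\varpi_1$ and $d\varpi_2$ impose nontrivial constraints.

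On $U_{\omega_1\omega_2}$, I introduce the standard inhomogeneous coordinate by writing $\pi|_w = p^1_1(w)\,\omega_1|_w + p^1_2(w)\,\omega_2|_w$. Substituting into (\ref{par2-str-equ}) yields $d\varpi_1|_w \equiv p^1_2(w)\,\omega_1|_w \wedge \omega_2|_w$ and $d\varpi_2|_w \equiv -p^1_1(w)\,\omega_1|_w \wedge \omega_2|_w$, so $\Sigma(R)$ on this chart is cut out by $p^1_1 = p^1_2 = 0$, i.e.\ by $\pi|_w = 0$, producing exactly one integral $2$-plane over each $v \in U$. On $U_{\omega_1\pi}$, writing $\omega_2|_w = p^2_1(w)\,\omega_1|_w + p^2_2(w)\,\pi|_w$, the obstruction $d\varpi_1|_w \equiv \omega_1|_w \wedge \pi|_w \ne 0$ already rules out integral elements. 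Symmetrically, on $U_{\omega_2\pi}$, with $\omega_1|_w = p^3_1(w)\,\omega_2|_w + p^3_2(w)\,\pi|_w$, the relation $d\varpi_2|_w \equiv \omega_2|_w \wedge \pi|_w \ne 0$ obstructs integrability there as well.

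Combining the three local analyses, $\Sigma(R)$ meets each fiber of $J(D,2) \to R$ in a single $2$-plane (lying entirely inside the chart $U_{\omega_1\omega_2}$), so $p : \Sigma(R) \to R$ is a bijective smooth section of the Grassmann bundle, and one identifies $\Sigma(R)$ with $R$. The main ``obstacle'' is really only conceptual: the computation is a direct variant of the type (II) case of Lemma \ref{par1-prolongation}. The point worth emphasizing is that, for type (III), the two equations $d\varpi_1|_w = 0$ and $d\varpi_2|_w = 0$ pin down respectively the $\omega_2$- and $\omega_1$-components of $\pi|_w$ in the chart $U_{\omega_1\omega_2}$, and together force $\pi|_w = 0$, giving the unique canonical section that realizes $\Sigma(R) = R$.
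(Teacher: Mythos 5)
Your proof is correct and follows essentially the same chart-by-chart computation as the paper: on $U_{\omega_1\omega_2}$ the conditions $d\varpi_1|_w=d\varpi_2|_w=0$ force $p^1_1=p^1_2=0$ (one trivial integral element), while on $U_{\omega_1\pi}$ and $U_{\omega_2\pi}$ the nonvanishing of $\omega_1\wedge\pi$ and $\omega_2\wedge\pi$ respectively rules out integral elements. The only addition is your explicit remark that the $d\varpi_0$ condition is automatic, which the paper leaves implicit.
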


\begin{remark}\label{remark-par2-prolongation}
Two differential systems $(R,D)$ and $(\Sigma(R), \hat D)$ are different 
in the same way to Remark \ref{remark-par1-prolongation}.    
\end{remark}

\begin{proof}
We also use the covering  (\ref{Grassmann-cover}) of $\pi^{-1}(U)$ 
for the Grassmann bundle $J(D,2)$ and explicitly describe the defining equation of $\Sigma(R)$ 
in terms of the inhomogeneous Grassmann coordinate of fibers in 
$U_{\omega_{1}\omega_{2}}, U_{\omega_{1}\pi}, U_{\omega_{2}\pi }$. 
First we consider it on $U_{\omega_{1}\omega_{2}}$. 
For $w\in U_{\omega_{1}\omega_{2}}$, 
$w$ is a $2$-dimensional subspace of $D(v)$, 
where $p(w)=v$. 
Hence by restricting $\pi$ to $w$, we can introduce the 
inhomogeneous coordinate $p^{1}_{i}$ of fibers of $J(D,2)$ around $w$ 
with $\pi |_{w}={p^{1}_{1}}(w)\omega_{1}|_{w}+{p^{1}_{2}}(w)\omega_{2}|_{w}$. 
Moreover $w$ satisfies $d\varpi_{1}|_{w}\equiv d\varpi_{2}|_{w}\equiv 0$ in (\ref{par2-str-equ}). Thus we have  
\begin{align*}
d\varpi_{1}|_{w}&\equiv \omega_{1}|_{w}\wedge \pi|_{w}
                \equiv p^{1}_{2}(w)\omega_{1}|_{w}\wedge\omega_{2}|_{w},\\ 
d\varpi_{2}|_{w}&\equiv \omega_{2}|_{w}\wedge \pi|_{w}
                \equiv p^{1}_{1}(w)\omega_{2}|_{w}\wedge\omega_{1}|_{w}. 
\end{align*} 
In this way, we obtain the defining equations $f=g=0$ of $\Sigma(R)$ 
in $U_{\omega_{1}\omega_{2}}$ of $J(D,2)$, where 
$f=p_{2}^{1}, g=p_{1}^{1}$. Hence, we have one trivial integral element. 
Next we consider on $U_{\omega_{1}\pi }$. 
For the same way, by restricting $\omega_{2}$ to $w$, we can introduce the 
inhomogeneous coordinate $p^{2}_{i}$ of fibers of $J(D,2)$ around $w$ 
with $\omega_{2}|_{w}={p^{2}_{1}}(w)\omega_{1}|_{w}+{p^{2}_{2}}(w)\pi |_{w}$. 
Moreover $w$ satisfies $d\varpi_{1}|_{w}\equiv d\varpi_{2}|_{w}\equiv 0$. However we have 
$d\varpi_{1}|_{w} \equiv \omega_{1}|_{w}\wedge \pi |_{w} \not = 0$. 
Hence there does not exist an integral element. 
Finally we consider on $U_{\omega_{2}\pi }$. 
In this situation, by restricting $\omega_1$ to $w$, 
we can also introduce the 
inhomogeneous coordinate $p^{3}_{i}$ of fibers of $J(D,2)$ around $w$ with 
$\omega_{1}|_w={p^{3}_{1}}(w)\omega_{2}|_{w}+{p^{3}_{2}}(w)\pi |_{w}$.  
Moreover $w$ satisfies $d\varpi_{1}|_{w}\equiv d\varpi_{2}|_{w}\equiv 0$. 
However we have 
$d\varpi_{2}|_{w}\equiv \omega_{2}|_{w} \wedge \pi |_{w} \not =0$.    
Hence there does not exist an integral element.  
\end{proof}

Finally we consider the case of type (IV). 
\begin{lemma}\label{torsion-prolongation}
Let $(R,D)$ be a differential system of type $(${\rm IV}$)$ 
with $2$ independent and $1$ dependent 
variables. Then, the rank $2$ prolongation $\Sigma(R)$ is equal to $R$.   
\end{lemma}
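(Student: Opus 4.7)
The plan is to mimic the chart-by-chart analysis carried out in Lemmas \ref{involutive-prolongation}, \ref{par1-prolongation}, and \ref{par2-prolongation}. I would cover $\pi^{-1}(U) \subset J(D,2)$ by the three open sets $U_{\omega_1\omega_2}$, $U_{\omega_1\pi}$, $U_{\omega_2\pi}$ appearing in (\ref{Grassmann-cover}), introduce an inhomogeneous Grassmann coordinate on each chart for a candidate 2-plane $w \subset D(v)$, and restrict the structure equations (\ref{torsion-str-equ}) to $w$ to read off the defining equations of $\Sigma(R)$ and the surviving integral elements.

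On $U_{\omega_1\omega_2}$, the decisive new feature compared with types (I)--(III) is the torsion term: writing $\pi|_w = p^1_1(w)\,\omega_1|_w + p^1_2(w)\,\omega_2|_w$, the first structure equation gives $d\varpi_1|_w \equiv \omega_1|_w \wedge \omega_2|_w$, which is nonzero independently of the coordinates $p^1_1, p^1_2$. Hence there is no integral element in $U_{\omega_1\omega_2}$. On $U_{\omega_2\pi}$, parametrize $\omega_1|_w = p^3_1(w)\,\omega_2|_w + p^3_2(w)\,\pi|_w$; then $d\varpi_2|_w \equiv \omega_2|_w \wedge \pi|_w \neq 0$, so again no integral element exists. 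Thus $\Sigma(R)$ is entirely contained in $U_{\omega_1\pi}$.

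On $U_{\omega_1\pi}$, I would parametrize $\omega_2|_w = p^2_1(w)\,\omega_1|_w + p^2_2(w)\,\pi|_w$. A short computation yields
\begin{align*}
d\varpi_1|_w &\equiv \omega_1|_w \wedge \omega_2|_w \equiv p^2_2(w)\,\omega_1|_w \wedge \pi|_w, \\
d\varpi_2|_w &\equiv \omega_2|_w \wedge \pi|_w \equiv p^2_1(w)\,\omega_1|_w \wedge \pi|_w,
\end{align*}
so the defining equations of $\Sigma(R) \cap U_{\omega_1\pi}$ are $f := p^2_1 = 0$ and $g := p^2_2 = 0$. These cut out the graph of a single smooth section $v \mapsto \{\omega_2 = 0\} \cap D(v)$ of $J(D,2) \to R$, and $df \wedge dg \neq 0$ is immediate, so $\Sigma(R)$ is a smooth submanifold and the projection $p\colon \Sigma(R) \to R$ is a diffeomorphism.

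The argument is almost purely bookkeeping, and no real obstacle is expected; the only point worth attention is that the torsion normal form produces two killed charts rather than one (as happened in types (II) and (III)), so care must be taken to verify that the single surviving integral element in $U_{\omega_1\pi}$ actually extends to a global section of $J(D,2) \to R$, but this follows from the fact that the equation $\omega_2|_w = 0$ is coframe-invariant on the locus where the normal form (\ref{torsion-str-equ}) holds.
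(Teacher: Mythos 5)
Your proposal is correct and follows essentially the same chart-by-chart computation as the paper: the torsion term makes $d\varpi_1|_w\equiv\omega_1|_w\wedge\omega_2|_w\neq 0$ on $U_{\omega_1\omega_2}$, the term $\omega_2\wedge\pi$ kills $U_{\omega_2\pi}$, and on $U_{\omega_1\pi}$ the defining equations $p^2_1=p^2_2=0$ leave the single integral element $\{\omega_2=0\}\cap D(v)$, so $\Sigma(R)\to R$ is a bijective section. The only inaccuracy is your parenthetical aside: in types (II) and (III) two charts are likewise killed (there $U_{\omega_1\omega_2}$ is the survivor); the genuinely new feature of type (IV) is that the surviving chart is $U_{\omega_1\pi}$ instead.
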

\begin{proof}
We also use the covering  (\ref{Grassmann-cover}) of $\pi^{-1}(U)$ 
for the Grassmann bundle $J(D,2)$ 
and explicitly describe the defining equation of $\Sigma(R)$ 
in terms of the inhomogeneous Grassmann coordinate of fibers in $U_{\omega_{1}\omega_{2}}, U_{\omega_{1}\pi}, U_{\omega_{2}\pi }$. 
First we consider it on $U_{\omega_{1}\omega_{2}}$. For $w\in U_{\omega_{1}\omega_{2}}$, $w$ is a $2$-dimensional subspace of $D(v)$, 
where $p(w)=v$. Hence, by restricting $\pi$ to $w$, we can introduce the 
inhomogeneous coordinate $p^{1}_{i}$ of fibers of $J(D,2)$ around $w$ with 
$\pi |_{w}={p^{1}_{1}}(w)\omega_{1}|_{w}+{p^{1}_{2}}(w)\omega_{2}|_{w}$.  
Moreover $w$ satisfies $d\varpi_{1}|_{w}\equiv d\varpi_{2}|_{w}\equiv 0$
in (\ref{torsion-str-equ}). However we have 
$d\varpi_{1}|_{w}\equiv \omega_{1}|_{w}\wedge \omega_{2}|_{w}\not =0.$ 
Hence there does not exist an integral element. 
Next we consider on $U_{{\omega}_{1}\pi}$. 
For the same way, by restricting $\omega_{2}$ to $w$, we 
can also introduce the 
inhomogeneous coordinate $p^{2}_{i}$ of fibers of $J(D,2)$ around $w$ 
with $\omega_{2}|_{w}=p^{2}_{1}(w)\omega_{1}|_{w}+{p^{2}_{2}}(w)\pi |_{w}.$
Moreover $w$ satisfies 
$d\varpi_{1}|_{w}\equiv d\varpi_{2}|_{w}\equiv 0$. Thus we obtain  
\begin{align*}
d\varpi_{1}|_{w}&\equiv \omega_{1}|_{w}\wedge \omega_{2}|_{w},
                \equiv p^{2}_{2}(w)\omega_{1}|_{w}\wedge \pi|_{w},\\
d\varpi_{2}|_{w}&\equiv \omega_{2}|_{w}\wedge \pi|_{w},
                \equiv p^{2}_{1}(w)\omega_{1} |_{w}\wedge\pi |_{w}. 
\end{align*} 
In this way, we obtain the defining equations $f=g=0$ of $\Sigma(R)$ in $U_{\omega_{1}\pi}$ of $J(D,2)$, where 
$f=p_{2}^{2}, g=p_{1}^{2}$. Hence we have one trivial integral element. 
Finally, we consider on $U_{\omega_{2}\pi}$.  
For this situation, by restricting $\omega_1$ to $w$, we can also 
introduce the inhomogeneous coordinate $p^{3}_{i}$ of fibers of $J(D,2)$ around $w$ with  
$\omega_{1}|_w={p^{3}_{1}}(w)\omega_{2}|_{w}+{p^{3}_{2}}(w)\pi |_{w}$.  
Moreover $w$ satisfies 
$d\varpi_{1}|_{w}\equiv d\varpi_{2}|_{w}\equiv 0$. 
However, we have 
$d\varpi_{2}|_{w}\equiv \omega_{2}|_{w} \wedge \pi |_{w} \not =0$.    
Hence there does not exist an integral element. 
\end{proof}

Summarizing these lemmas in this section, 
we obtain the following theorem. 
\begin{theorem}\label{characterization-prolongation}
Let $(R,D)$ be a second-order regular overdetermined system 
of codimension $2$ for 
$2$ independent and $1$ dependent variables. 
Then the only involutive systems $($i.e. type of $(I))$ have non-trivial  
rank $2$ prolongations $\Sigma(R)$. Moreover, in this case, 
the rank $2$ prolongation $\Sigma(R)$ is a $S^1$-bundle over $R$. 
\end{theorem}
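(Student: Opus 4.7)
The plan is to observe that the theorem is a direct consolidation of Lemmas \ref{involutive-prolongation}--\ref{torsion-prolongation}: once the structural classification is in hand, every regular overdetermined system of codimension $2$ with two independent and one dependent variables falls, at each point $w\in R$, into exactly one of the four normal forms \eqref{involutive-str-equ}--\eqref{torsion-str-equ}. So the proof is organized as a case-by-case invocation of the four lemmas, plus a brief verification that these four cases are exhaustive and that the word \emph{non-trivial} is correctly interpreted.

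First I would recall the trichotomy/quadrichotomy already established in this section: under the regularity assumption and the presence (or absence) of torsion, the structure equation of $(R,D)$ at $w$ is equivalent, via a suitable choice of coframe, to one of (I)--(IV). This reduces the statement to a local analysis around each $w\in R$ separately in each of the four types.

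Next I would apply the lemmas. In type (I), Lemma \ref{involutive-prolongation} shows that $\Sigma(R)$ is a smooth submanifold of $J(D,2)$ and carries the structure of an $S^{1}$-bundle over $R$, the $S^{1}\cong \mathbb{RP}^{1}$ arising from the gluing of the two non-empty Grassmann charts $U_{\omega_{1}\omega_{2}}$ and $U_{\omega_{1}\pi}$ through the transition $a\mapsto 1/a$; this is precisely the content of the second assertion. In types (II), (III), (IV), Lemmas \ref{par1-prolongation}, \ref{par2-prolongation}, and \ref{torsion-prolongation} respectively show that in each Grassmann chart the defining equations of $\Sigma(R)$ reduce either to a pair of independent equations $f=g=0$ cutting out a single point in the fiber, or to an inconsistent system, so the projection $p:\Sigma(R)\to R$ is a bijection. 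In particular each fibre $\Sigma_{x}$ is a singleton (the ``trivial integral element'') and hence the prolongation produces no new geometric degrees of freedom beyond $R$ itself.

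The main (and essentially only) point that deserves emphasis is the interpretation of \emph{non-trivial}: in types (II)--(IV) the rank $2$ prolongation collapses to $R$ with exactly one $2$-plane over each point, whereas in type (I) each fibre $\Sigma_{x}$ is a full $1$-parameter family of $2$-planes, namely $\mathbb{RP}^{1}$. Since the four cases are mutually exclusive and exhaustive, taking the union over $w\in R$ yields the dichotomy asserted in the theorem. No additional estimate or global argument is required; the only mildly subtle step is verifying in the involutive case that the local $S^{1}$-fibre structure, obtained chart-by-chart via the transition $\phi(v,a)=(v,1/a)$, globalizes to an honest $S^{1}$-bundle, and this is already handled at the end of the proof of Lemma \ref{involutive-prolongation}.
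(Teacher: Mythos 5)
Your proposal matches the paper's argument: the theorem is stated there as an immediate summary of Lemmas \ref{involutive-prolongation}--\ref{torsion-prolongation}, with the classification into types (I)--(IV) supplying the case division and the $S^1$-bundle structure coming from the gluing in the type (I) lemma. Your additional remarks on exhaustiveness and on the meaning of ``non-trivial'' are consistent with, and slightly more explicit than, what the paper records.
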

\begin{corollary}\label{prolongation-difference}
Let $(R,D)$ be a second-order 
regular overdetermined system of codimension $2$ for 
$2$ independent and $1$ dependent variables. Then we have 
\begin{align*}
&R^{(1)}=\Sigma(R) \Longleftrightarrow {\rm (II),\ (III)},\\ 
&R^{(1)}\not =\Sigma(R) \Longleftrightarrow {\rm (I),\ (IV)}.    
\end{align*}  
\end{corollary}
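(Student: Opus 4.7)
The plan is to deduce this corollary by combining the general inclusion $R^{(1)} \subset \Sigma(R)$ with the case-by-case descriptions of $\Sigma(R)$ provided by Lemmas \ref{involutive-prolongation}--\ref{torsion-prolongation} (equivalently, Theorem \ref{characterization-prolongation}) and the description of $R^{(1)}$ recorded in the Remark preceding this corollary. The inclusion $R^{(1)}\subset\Sigma(R)$ is immediate from the definitions: a transversal $2$-dimensional integral element is \emph{a fortiori} a $2$-dimensional integral element of $D$. A key auxiliary observation, used throughout the case check, is that the vertical direction $\ker(\pi^{2}_{1}|_{R})_{*}$ is dual (in the adapted coframe) to $\pi$, so a $2$-plane $v\subset D$ is transversal to this kernel if and only if $\omega_{1}|_{v}\wedge\omega_{2}|_{v}\neq 0$; this is exactly the defining condition of the chart $U_{\omega_{1}\omega_{2}}$ used systematically in the lemmas.

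For type (I), Lemma \ref{involutive-prolongation} exhibits $\Sigma(R)$ as an $S^{1}$-bundle over $R$, while the Remark states $R^{(1)}$ is an $\mathbb{R}$-bundle over $R$. Inspecting the two charts covering $\Sigma(R)$, the $\mathbb{R}$-fiber of $R^{(1)}$ matches the chart $U_{\omega_{1}\omega_{2}}$ with fiber coordinate $a$, while the complementary section $\{b=0\}\subset U_{\omega_{1}\pi}$ adds precisely one non-transversal point per fiber, namely the ``point at infinity'' of $\mathbb{RP}^{1}\cong S^{1}$. Hence $R^{(1)}\subsetneq\Sigma(R)$. For type (IV), Lemma \ref{torsion-prolongation} gives $\Sigma(R)=R$, which is non-empty, while the Remark states $R^{(1)}=\emptyset$; so once again $R^{(1)}\neq\Sigma(R)$.

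For types (II) and (III), Lemmas \ref{par1-prolongation} and \ref{par2-prolongation} identify $\Sigma(R)$ with the image of a single section of the Grassmann bundle $J(D,2)\to R$, namely the unique integral element in $U_{\omega_{1}\omega_{2}}$ cut out by $p^{1}_{1}=p^{1}_{2}=0$. Since that element lies in $U_{\omega_{1}\omega_{2}}$ it is automatically transversal, hence belongs to $R^{(1)}$; combined with $R^{(1)}\subset\Sigma(R)$ this forces the equality $R^{(1)}=\Sigma(R)$. I do not anticipate any genuine obstacle beyond bookkeeping: all of the substantive work has already been carried out in the four preceding lemmas, and the corollary simply assembles them against the known structure of $R^{(1)}$ from the Remark.
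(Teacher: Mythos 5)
Your proposal is correct and follows exactly the route the paper intends: the corollary is stated without proof as an immediate consequence of Lemmas \ref{involutive-prolongation}--\ref{torsion-prolongation} (summarized in Theorem \ref{characterization-prolongation}) together with the Remark describing $R^{(1)}$, and your case check --- including the identification of transversality to $\ker(\pi^{2}_{1}|_{R})_{*}$ with the chart condition $\omega_{1}|_{v}\wedge\omega_{2}|_{v}\neq 0$ --- just makes that deduction explicit. No gaps.
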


\section{Structures of rank 2 prolongations for involutive systems}
In this section, we study the geometric structures of rank 2 prolongations 
$(\Sigma(R), \hat D)$ of involutive systems $(R,D)$ with respect to 
2 independent and 1 dependent variables. 
For this purpose, we consider the decomposition  
\begin{equation}\label{geometric decomposition}
\Sigma(R)=\Sigma_{0}\cup \Sigma_{1},
\end{equation}
where 
$\Sigma_{i}=\left\{w \in \Sigma(R)\ |\ {\rm dim}\ 
(w\cap {\rm fiber})=i \right\}$ $(i=0, 1)$. 
Here {``\rm fiber''} means that the fiber of $TR\supset D\to TJ^1$. 
For the covering of the fibration 
$p:\Sigma(R)\to R$, we have 
\begin{equation*}
\Sigma_{0}|_{p^{-1}(U)}=U_{\omega_{1}\omega_{2}},\quad 
\Sigma_{1}|_{p^{-1}(U)}=U_{\omega_{1}\pi}
\backslash U_{\omega_{1}\omega_{2}}.
\end{equation*} 
The set $\Sigma_0$ is an open subset in $\Sigma(R)$, 
and $\Sigma_1$ is a codimension 1 submanifold in $\Sigma(R)$.\par   
Considering this decomposition, we obtain the following result.   
\begin{proposition}\label{symbol}
For any point $w\in \Sigma_{0}$, the symbol algebra $\mathfrak f^{0}(w)$ 
is isomorphic to  
$$\mathfrak f^{0}:=\mathfrak f_{-4}\oplus\mathfrak f_{-3}\oplus\mathfrak f_{-2}\oplus\mathfrak f_{-1}$$ 
whose bracket relations are given by 
$$[X_{a},\ X_{\omega_{2}}]=X_{\pi},\quad 
[X_{\pi},\ X_{\omega_{2}}]=X_{2},\quad     
[X_{1},\ X_{\omega_{1}}]=[X_{2},\ X_{\omega_{2}}]=X_{0},$$ 
and the other brackets are trivial. Here $\left\{X_{0},\ X_{1},\ X_{2},\ 
X_{\omega_{1}},\ X_{\omega_{2}},\ X_{\pi},\ X_{a}\right\}$ is a basis of $\mathfrak f^{0}$ and 
\begin{align*}
\mathfrak f_{-1}&=\left\{X_{\omega_{1}},\ X_{\omega_{2}},\ X_{a}\right\},\quad 
\mathfrak f_{-2}=\left\{X_{\pi}\right\},\quad
\mathfrak f_{-3}=\left\{X_{1},\ X_{2}\right\},\quad
\mathfrak f_{-4}=\left\{X_{0}\right\}.
\end{align*} 
For any point $w\in \Sigma_{1}$, the symbol algebra $\mathfrak f^{1}(w)$ 
is isomorphic to  
$$\mathfrak f^{1}:=\mathfrak f_{-4}\oplus\mathfrak f_{-3}\oplus\mathfrak f_{-2}\oplus\mathfrak f_{-1}$$ 
whose bracket relations are given by
$$[X_{b},\ X_{\pi}]=X_{\omega_{2}},\quad 
[X_{\pi},\ X_{\omega_{2}}]=X_{2},\quad    
[X_{1},\ X_{\omega_{1}}]=X_{0},$$ 
and the other brackets are trivial. Here  
$\left\{X_{0},\ X_{1},\ X_{2},\ 
X_{\omega_{1}},\ X_{\omega_{2}},\ X_{\pi},\ X_{b}\right\}$ 
is a basis of $\mathfrak f^{1}$ and 
\begin{align*}
\mathfrak f_{-1}&=\left\{X_{\omega_{1}},\ X_{\pi},\ X_{b}\right\},\quad
\mathfrak f_{-2}=\left\{X_{\omega_{2}}\right\},\quad
\mathfrak f_{-3}=\left\{X_{1},\ X_{2}\right\},\quad
\mathfrak f_{-4}=\left\{X_{0}\right\}.
\end{align*}
\end{proposition}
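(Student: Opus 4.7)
The plan is to prove both halves of Proposition \ref{symbol} by direct computation in the two local charts supplied by the decomposition $\Sigma(R)=\Sigma_{0}\cup\Sigma_{1}$. On $\Sigma_{0}=U_{\omega_{1}\omega_{2}}$ I take the coframe $\{\varpi_{0},\varpi_{1},\varpi_{2},\varpi_{\pi},\omega_{1},\omega_{2},da\}$ with $\varpi_{\pi}=\pi-a\omega_{2}$, and denote by $\{X_{0},X_{1},X_{2},X_{\pi},X_{\omega_{1}},X_{\omega_{2}},X_{a}\}$ the dual frame, so that $\hat{D}=\mathrm{span}(X_{\omega_{1}},X_{\omega_{2}},X_{a})$; the filtration to impose is the one read off from the stated weights, namely $F^{-1}=\hat{D}$, $F^{-2}=\ker\{\varpi_{0},\varpi_{1},\varpi_{2}\}$, $F^{-3}=\ker\{\varpi_{0}\}$, $F^{-4}=T\Sigma(R)$. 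A short check using (\ref{involutive-str-equ}) together with the identity $\omega_{2}\wedge\pi=\omega_{2}\wedge\varpi_{\pi}$ valid on $\Sigma(R)$ confirms that $F$ is tangential, so its associated graded is a well-defined nilpotent Lie algebra. On $\Sigma_{1}$ I would set up the mirror-image coframe $\{\varpi_{0},\varpi_{1},\varpi_{2},\varpi_{\omega_{2}},\omega_{1},\pi,db\}$ with $\varpi_{\omega_{2}}=\omega_{2}-b\pi$ restricted to $b=0$, together with the analogous filtration.

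Every non-trivial bracket is then extracted from the identity $\varpi([Y,Z])=-d\varpi(Y,Z)+Y(\varpi(Z))-Z(\varpi(Y))$ applied to dual-frame vectors. For $[X_{a},X_{\omega_{2}}]\equiv X_{\pi}$ on $\Sigma_{0}$, note $d\varpi_{\pi}=d\pi-da\wedge\omega_{2}-a\,d\omega_{2}$; because $d\pi$ and $d\omega_{2}$ are pulled back from $R$ while $X_{a}$ is $p$-vertical, only $-da\wedge\omega_{2}$ contributes, giving $d\varpi_{\pi}(X_{a},X_{\omega_{2}})=-1$. The symmetric computation with $d\varpi_{\omega_{2}}=d\omega_{2}-db\wedge\pi-b\,d\pi$ on $\Sigma_{1}$ yields $[X_{b},X_{\pi}]\equiv X_{\omega_{2}}$. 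For $[X_{\pi},X_{\omega_{2}}]\equiv X_{2}$, take the representative $\tilde{X}_{\pi}=[X_{a},X_{\omega_{2}}]$, observe $\pi(\tilde{X}_{\pi})=1$ (which follows from $d\pi(X_{a},X_{\omega_{2}})=0$), and evaluate $d\varpi_{2}\equiv\omega_{2}\wedge\varpi_{\pi}$ on the pair $(\tilde{X}_{\pi},X_{\omega_{2}})$; the analogous argument handles $\Sigma_{1}$. Finally $[X_{1},X_{\omega_{1}}]\equiv X_{0}$ and, on $\Sigma_{0}$, $[X_{2},X_{\omega_{2}}]\equiv X_{0}$ drop directly out of $d\varpi_{0}\equiv\omega_{1}\wedge\varpi_{1}+\omega_{2}\wedge\varpi_{2}\pmod{\varpi_{0}}$ applied to the respective dual-frame pairs.

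The main obstacle is verifying that every bracket \emph{not} listed vanishes in the symbol. Structure equations (\ref{involutive-str-equ}) specify $d\omega_{1},d\omega_{2},d\pi$ only modulo $\langle\varpi_{0},\varpi_{1},\varpi_{2}\rangle$, so their horizontal components could a priori contaminate, say, $[X_{\omega_{1}},X_{\omega_{2}}]\bmod F^{-1}$ with a stray $X_{\pi}$, or $[X_{\pi},X_{\omega_{1}}]\bmod F^{-2}$ with a stray $X_{2}$. My plan is to exploit the gauge freedom in the choice of a coframe compatible with (\ref{involutive-str-equ}): at each filtration level an adjustment of the type $X_{\omega_{1}}\mapsto X_{\omega_{1}}-c\,X_{a}$ (and analogous redefinitions of $X_{\omega_{2}},X_{\pi}$) absorbs the offending term, because the three non-trivial brackets computed above supply exactly the correcting vector. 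Since the isomorphism class of the symbol algebra of a filtered manifold is gauge-invariant, the algebra that emerges must be the stated $\mathfrak{f}^{0}$. On $\Sigma_{1}$ the shifted weight $X_{\omega_{2}}\in\mathfrak{f}_{-2}$ automatically pushes brackets such as $[X_{2},X_{\omega_{2}}]$ into $\mathfrak{f}_{-5}=0$, which explains why that relation is listed on $\Sigma_{0}$ but disappears on $\Sigma_{1}$.
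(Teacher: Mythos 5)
Your proposal is correct and follows essentially the same route as the paper: the same decomposition $\Sigma(R)=\Sigma_{0}\cup\Sigma_{1}$, the same filtration $F^{-1}=\hat D\subset F^{-2}=\partial\hat D\subset F^{-3}=\{\varpi_{0}=0\}\subset F^{-4}=T\Sigma(R)$, and the same computation of brackets by evaluating $d\varpi$ on pairs of dual-frame vectors. The only difference is cosmetic: the paper performs your ``gauge adjustment'' up front by taking $\varpi_{a}:=da+f\omega_{1}$ (resp.\ $\varpi_{b}:=db+f\omega_{1}$) as the last coframe element, so that $d\varpi_{\pi}\equiv\omega_{2}\wedge\varpi_{a}$ holds exactly modulo the ideal and all unlisted brackets vanish on the nose rather than after a frame change.
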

\begin{proof}
We first prove the assertion for the symbol algebras on $\Sigma_{0}$. 
We recall that the canonical system $\hat D$ on $\Sigma_{0}$ 
is given by 
$\hat D=\left\{\varpi_{0}=\varpi_{1}=\varpi_{2}=\varpi_{\pi}=0\right\},$ 
where $\varpi_{\pi}=\pi-a \omega_{2}.$ 
Then the structure equation of $\hat D$ on $\Sigma_{0}$ can be written as  
\begin{align}
d\varpi_{i}&\equiv 0 \hspace{3cm} \quad
\mod\ \varpi_{0},\ \varpi_{1},\ \varpi_{2},\ \varpi_{\pi}, \nonumber \\
d\varpi_{\pi}&\equiv \omega_{2}\wedge (da+f \omega_{1}) \hspace{2cm}  
\mod\ \varpi_{0},\ \varpi_{1},\ \varpi_{2},\ \varpi_{\pi}, \label{str-equ1}
\end{align} 
where $f$ is an appropriate function. Hence we have 
$\partial \hat D=\left\{\varpi_{0}=\varpi_{1}=\varpi_{2}=0\right\}=p_{*}^{-1}(D)$. 
The structure equation of $\partial \hat D$ is equal to 
the structure equation (\ref{involutive-str-equ}) of $(R,D)$. 
Now we provide a filtration structure 
$\left\{F^{p}\right\}_{p=-1}^{-4}$ on $\Sigma(R)$ 
around $w\in \Sigma_{0}$. We set 
$F^{-4}:=T\Sigma(R)$, 
$F^{-3}:=\left\{\varpi_{0}=0\right\}$,
$F^{-2}:=\partial \hat D=\left\{\varpi_{0}=\varpi_{1}=\varpi_{2}=0\right\}$, 
$F^{-1}:=\hat D$.
Moreover, for $w\in \Sigma_{0}$, we set 
$\mathfrak f_{-1}(w):=F^{-1}(w)=\hat D(w)$, 
$\mathfrak f_{-2}(w):=F^{-2}(w)/ F^{-1}(w)$, 
$\mathfrak f_{-3}(w):= F^{-3}(w)/ F^{-2}(w)$, 
$\mathfrak f_{-4}(w):= F^{-4}(w)/ F^{-3}(w)$, and  
$$\mathfrak f^{0}(w)=\mathfrak f_{-4}(w)\oplus \mathfrak f_{-3}(w)\oplus
\mathfrak f_{-2}(w)\oplus \mathfrak f_{-1}(w).$$ 
Then, by the definition of symbol algebras associated with 
filtration structures in Section 2, $\mathfrak f^{0}(w)$ has 
the structure of a nilpotent graded Lie algebra. 
We consider the bracket relation of $\mathfrak f^{0}(w)$. 
We take a coframe around $w\in \Sigma_{0}$ given by 
\begin{equation}
\left\{\varpi_{0},\ \varpi_{1},\ \varpi_{2},\ \varpi_{\pi},\  
\omega_{1},\ \omega_{2},\ \varpi_{a}:=da+f\omega_{1} \right\},  
\end{equation}
and the dual frame 
\begin{equation}
\left\{X_{0},\ X_{1},\ X_{2},\ X_{\pi},\ X_{\omega_{1}},\ 
X_{\omega_{2}},\ X_{a} \right\}. 
\end{equation} 
Then, the structure equations of each subbundle in the filtration 
$\left\{ F^{p}\right\}_{p=-1}^{-4}$
can be written by the coframe 
\begin{align*}
d\varpi_{i}&\equiv 0 \hspace{3cm} \quad
\mod\ \varpi_{0},\ \varpi_{1},\ \varpi_{2},\ \varpi_{\pi},\\
d\varpi_{\pi}&\equiv \omega_{2}\wedge \varpi_{a} \hspace{2cm}  
\mod\ \varpi_{0},\ \varpi_{1},\ \varpi_{2},\ \varpi_{\pi}, 
\end{align*}
\begin{align*}
d\varpi_{0}&\equiv \omega_1 \wedge \varpi_1+\omega_2 \wedge \varpi_2\quad \ 
{\rm mod\ } \varpi_0, \nonumber \\
d\varpi_{1}&\equiv \ 0 \quad \quad \quad \quad \quad \quad \ \ 
{\quad \quad \rm mod\ } 
\varpi_{0}, \varpi_{1}, \varpi_{2}, \\ 
d\varpi_{2}&\equiv \quad \quad \quad \quad \ \omega_2 \wedge \varpi_{\pi} 
{\quad \quad \rm mod\ } 
\varpi_{0}, \varpi_{1}, \varpi_{2}. \nonumber
\end{align*}
\begin{align*}
d\varpi_{0}\equiv \omega_{1}\wedge \varpi_{1}+\omega_{2}\wedge \varpi_{2}
  \ \mod\ &  \varpi_{0},\ \varpi_{1}\wedge \varpi_{2},\ \varpi_{1}\wedge \varpi_{\pi},\      
 \varpi_{2}\wedge \varpi_{\pi}. 
\end{align*}
We set 
$$[X_{\omega_{2}}, X_{a}]=A X_{\pi},\quad (A\in \mathbb R).$$
Then 
\begin{align*}
d\varpi_{\pi}(X_{\omega_{2}}, X_{a})&
=X_{\omega_{2}}\varpi_{\pi}(X_{a})-X_{a}\varpi_{\pi}(X_{\omega_{2}})
 -\varpi_{\pi}([X_{\omega_{2}}, X_{a}]),\\
 &=-\varpi_{\pi}([X_{\omega_{2}}, X_{a}])=-A.  
\end{align*}
On the other hand 
\begin{align*}
d\varpi_{\pi}(X_{\omega_{2}}, X_{a})&
=\omega_{2}(X_{\omega_{2}})\varpi_{a}(X_{a})
-\varpi_{a}(X_{\omega_{2}})\omega_{2}(X_{a}),\\
&=1.
\end{align*}
Therefore, $A=-1$. 
The other brackets are also obtained by the same argument and the definition 
of symbol algebras associated with the filtration structure. 
Thus we have the bracket relation of $\mathfrak f^{0}$.\par 
We next prove the assertion for the symbol algebras on $\Sigma_{1}$. 
We recall that $\Sigma_{1}$ is locally given by 
$U_{\omega_{1}\pi}\backslash U_{\omega_{1}\omega_{2}}$. 
Thus we may assume on 
$U_{\omega_{1}\pi}\backslash U_{\omega_{1}\omega_{2}}=\left\{b=0\right\}\subset U_{\omega_{1}\pi}$. 
Then the canonical system $\hat D$ is given by 
$\hat D=\left\{\varpi_{0}=\varpi_{1}=\varpi_{2}=\varpi_{\omega_{2}}=0\right\},$ 
where $\varpi_{\omega_{2}}=\omega_{2}-b \pi$. Note that 
$\varpi_{\omega_{2}}=\omega_{2}$ on $\Sigma_{1}$. 
The structure equation of $\hat D$ at a point $w\in \Sigma_{1}=\left\{b=0\right\}$ is 
\begin{align}
d\varpi_{i}&\equiv 0 \hspace{3cm} \quad
\mod\ \varpi_{0},\ \varpi_{1},\ \varpi_{2},\ \varpi_{\omega_{2}},\nonumber \\
d\varpi_{\omega_{2}}&\equiv \pi\wedge (db+f \omega_{1}) \hspace{2cm}  
\mod\ \varpi_{0},\ \varpi_{1},\ \varpi_{2},\ \varpi_{\omega_{2}}, \label{str-equ2}
\end{align} 
where $f$ is an appropriate function. Hence we have 
$\partial \hat D=\left\{\varpi_{0}=\varpi_{1}=\varpi_{2}=0\right\}=p_{*}^{-1}(D)$. 
The structure equation of $\partial \hat D$ is equal to 
the structure equation (\ref{involutive-str-equ}) of $(R,D)$. 
Here, we take the filtration 
which is same to the case of $\mathfrak f^{0}$. 
Then we have the symbol algebra $\mathfrak f^{1}(w)$ at a point $w\in \Sigma_{1}$ 
given by 
$$\mathfrak f^{1}(w)=\mathfrak f_{-4}(w)\oplus \mathfrak f_{-3}(w)\oplus
\mathfrak f_{-2}(w)\oplus \mathfrak f_{-1}(w).$$ 
We consider the bracket relation of $\mathfrak f^{1}(w)$. 
We take a coframe around $w\in \Sigma_{1}$ given by 
\begin{equation}
\left\{\varpi_{0},\ \varpi_{1},\ \varpi_{2},\ \varpi_{\omega_{2}},\  
\omega_{1},\ \pi,\ \varpi_{b}:=db+f\omega_{1} \right\},  
\end{equation}
and the dual frame 
\begin{equation}
\left\{X_{0},\ X_{1},\ X_{2},\ X_{\omega_{2}},\ X_{\omega_{1}},\ 
X_{\pi},\ X_{b} \right\}. 
\end{equation} 
Then the structure equations of each subbundle in the filtration 
$\left\{F^{p}\right\}_{p=-1}^{-4}$ 
can be written by the coframe 
\begin{align*}
d\varpi_{i}&\equiv 0 \hspace{3cm} \quad
\mod\ \varpi_{0},\ \varpi_{1},\ \varpi_{2},\ \varpi_{\omega_{2}},\\
d\varpi_{\omega_2}&\equiv \pi\wedge \varpi_{b} \hspace{2cm}  
\mod\ \varpi_{0},\ \varpi_{1},\ \varpi_{2},\ \varpi_{\omega_{2}}, 
\end{align*}
\begin{align*}
d\varpi_{0}&\equiv \omega_1 \wedge \varpi_1+\omega_2 \wedge \varpi_2\quad \ 
{\rm mod\ } \varpi_0, \nonumber \\
d\varpi_{1}&\equiv \ 0 \quad \quad \quad \quad \quad \quad \ \ 
{\quad \quad \rm mod\ } 
\varpi_{0}, \varpi_{1}, \varpi_{2}, \\ 
d\varpi_{2}&\equiv \quad \quad \quad \quad \ \omega_2 \wedge \pi  
{\quad \quad \rm mod\ } 
\varpi_{0}, \varpi_{1}, \varpi_{2}. \nonumber
\end{align*}
\begin{align*}
d\varpi_{0}\equiv \omega_{1}\wedge \varpi_{1} 
  \ \mod\ &  \varpi_{0},\ \varpi_{1}\wedge \varpi_{2},\ \varpi_{1}\wedge \varpi_{\omega_{2}},\      
 \varpi_{2}\wedge \varpi_{\omega_{2}}. 
\end{align*}
By the definition of the symbol algebras associated with the filtration structure and 
same argument in the proof of $\mathfrak f^{0}$, 
we obtain the bracket relation for $\mathfrak f^{1}$.     
\end{proof}
In the rest of this section, we mention a tower structure constructed  
by successive rank 2 prolongations of involutive systems. 
\begin{theorem}\label{tower}
Let $(R,D)$ be a regularly involutive system with $2$ independent $1$ dependent variables. 
Then the k-th rank $2$ prolongation $(\Sigma^{k}(R), \hat{D}^{k})$ of $(R,D)$ 
is also $S^1$-bundle over $\Sigma^{k-1}(R)$. 
\end{theorem}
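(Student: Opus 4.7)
The plan is to argue by induction on $k$. The base case $k=1$ is exactly Lemma \ref{involutive-prolongation}. For the inductive step I would isolate the structural feature of $(\Sigma^{k-1}(R),\hat D^{k-1})$ responsible for the $S^{1}$-bundle conclusion at the previous stage, show that it persists to the next stage, and then re-run the proof of Lemma \ref{involutive-prolongation} essentially verbatim.

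The structural hypothesis I would carry through the induction is the existence, around each point of $\Sigma^{k-1}(R)$, of a local coframe
\[
\{\varpi_{0},\ \varpi_{1},\ \varpi_{2},\ \varpi_{\pi^{1}},\ \ldots,\ \varpi_{\pi^{k-1}},\ \omega_{1},\ \omega_{2},\ \pi^{k}\}
\]
in which $\hat D^{k-1}$ is annihilated by the first $k+2$ forms, and the ``chained'' structure equations
\begin{align*}
d\varpi_{0}&\equiv \omega_{1}\wedge\varpi_{1}+\omega_{2}\wedge\varpi_{2} && {\rm mod}\ \varpi_{0}, \\
d\varpi_{1}&\equiv 0, \quad d\varpi_{2}\equiv\omega_{2}\wedge\varpi_{\pi^{1}} && {\rm mod}\ \varpi_{0},\varpi_{1},\varpi_{2}, \\
d\varpi_{\pi^{j}}&\equiv\omega_{2}\wedge\varpi_{\pi^{j+1}} \ (1\le j\le k-2) && {\rm mod}\ \varpi_{0},\ldots,\varpi_{\pi^{j}}, \\
d\varpi_{\pi^{k-1}}&\equiv\omega_{2}\wedge\pi^{k} && {\rm mod}\ \varpi_{0},\ldots,\varpi_{\pi^{k-1}}
\end{align*}
hold. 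For $k=1$ this is precisely (\ref{involutive-str-equ}); at $k=2$ it is the coframe produced on $\Sigma_{0}$ in the proof of Proposition \ref{symbol} (with $\pi^{2}:=\varpi_{a}$), and an analogous local picture holds on $\Sigma_{1}$ after interchanging the roles of $\omega_{2}$ and $\pi$.

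Granting this hypothesis, I would replicate the proof of Lemma \ref{involutive-prolongation} with $\pi^{k}$ in the role of $\pi$ and $\varpi_{\pi^{k-1}}$ in the role of $\varpi_{2}$. A $2$-plane $w\subset\hat D^{k-1}(v)$ is integral precisely when $d\varpi_{\pi^{k-1}}|_{w}\equiv 0$, since every residual term in the other structure equations carries a factor lying in the annihilator of $\hat D^{k-1}$ and therefore restricts to zero on $w$. Covering the Grassmann fibre by $U_{\omega_{1}\omega_{2}}\cup U_{\omega_{1}\pi^{k}}\cup U_{\omega_{2}\pi^{k}}$, one checks that $U_{\omega_{2}\pi^{k}}$ contains no integral element, that on $U_{\omega_{1}\omega_{2}}$ the expansion $\pi^{k}|_{w}=p_{1}\omega_{1}|_{w}+p_{2}\omega_{2}|_{w}$ forces $p_{1}=0$ with $p_{2}$ a free fibre coordinate, and that the gluing to $U_{\omega_{1}\pi^{k}}$ is the map $(v,a)\mapsto(v,1/a)$; the fibre is therefore $\mathbb{RP}^{1}\cong S^{1}$.

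To close the induction I would introduce the tautological fibre coordinate $a$ on $U_{\omega_{1}\omega_{2}}\subset\Sigma^{k}(R)$, set $\varpi_{\pi^{k}}:=\pi^{k}-a\omega_{2}$, and mimic the computation leading to (\ref{str-equ1}) to produce a $1$-form $\pi^{k+1}:=da+f\omega_{1}$ (for a suitable function $f$) satisfying $d\varpi_{\pi^{k}}\equiv\omega_{2}\wedge\pi^{k+1}$ modulo $\varpi_{0},\ldots,\varpi_{\pi^{k}}$; a parallel computation with $b$ works on $U_{\omega_{1}\pi^{k}}$. This restores the structural hypothesis at level $k$, completing the induction. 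The main obstacle is the bookkeeping: at every stage one must verify that the ``tail'' term in the new chained structure equation again pairs $\omega_{2}$ with the fresh fibre $1$-form (and not with $\omega_{1}$ or an earlier $\varpi_{\pi^{j}}$), so that the trichotomy of charts and the identification of the fibre with $\mathbb{RP}^{1}$ reproduce themselves unchanged as $k$ grows.
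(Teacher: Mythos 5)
Your proposal is correct and follows essentially the same route as the paper: the paper's own proof simply observes that the structure equations (\ref{str-equ1}) and (\ref{str-equ2}) reproduce the type~(I) pattern (exactly one defining form with $d\varpi\equiv\theta\wedge\eta$ for two independent coframe elements of the rank~3 distribution), so the argument of Lemma \ref{involutive-prolongation} applies verbatim at each stage. Your version merely makes the induction hypothesis and the chart-by-chart bookkeeping explicit, which the paper leaves implicit.
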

\begin{proof}
From the expressions (\ref{str-equ1}) or (\ref{str-equ2}) of the structure equations of $(R,D)$, 
we easily show that we can define the 
$k$-th rank 2 prolongation $(\Sigma^{k}(R), \hat{D}^{k})$ 
successively. Then we have the assertion 
by using the same argument in the proof 
of Lemma \ref{involutive-prolongation}   
successively. 
\end{proof}

\section{Geometric singular solutions of involutive systems} 
In this section, we investigate the 
geometric singular solutions of involutive systems 
with 2 independent and 1 dependent variables. 
We first define the notion of 
the geometric singular solutions for regular PDEs (\cite{NS1, NS2}).  
\begin{definition}\label{solution}
Let $(R,D)$ be a second-order regular PDE 
in $J^{2}(\mathbb R^2,\mathbb R)$. 
For a $2$-dimensional integral manifold $S$ of $R$, 
if the restriction $\pi^{2}_{1}|_{R}: R\to J^1(\mathbb R^2,\mathbb R)$ 
of the natural projection $\pi^{2}_{1}:J^2\to J^1$ is an immersion 
on an open dense subset in $S$,  
then we call $S$  a {\it geometric solution} of $(R, D)$. If all points of 
geometric solutions $S$ are immersion points, 
then we call $S$ {\it regular solutions}. 
On the other hand, if geometric solutions $S$ have a 
nonimmersion point, then we call $S$ {\it singular solutions}.  
\end{definition}
From the definition, the image 
$\pi^{2}_{1}(S)$ of a geometric solution $S$ by the 
projection $\pi^{2}_{1}$ is Legendrian in $J^1(\mathbb R^2,\mathbb R)$,  
($\varpi_{0}|_{\pi^{2}_{1}(S)}=d\varpi_{0}|_{\pi^{2}_{1}(S)}=0$). 
From the proof of Lemmas 
\ref{par1-prolongation} and \ref{par2-prolongation}, 
there does not exist singular solutions of equations of types (II) and (III). 
On the other hand, 
Lemma \ref{torsion-prolongation} says the possibility of the existence of  
singular solutions for torsion type (IV). Of course, 
there does not exist regular solutions for these equations 
of torsion type. From now on, we investigate only involutive systems.\par   
Let $(R,D)$ be a regularly involutive system of codimension 2  
with 2 independent and 1 dependent variables. For this system $(R,D)$, we 
give two methods of the construction of geometric singular solutions 
which are given by 
the following. 
\begin{enumerate}
\item[(i)] We construct singular solutions of $(R,D)$ by using solutions of special type 
of rank 2 prolongations $(\Sigma(R),\hat D)$. 
\item[(ii)] We construct singular solutions   
of $(R,D)$ in terms of solutions (integral curves) of special type 
of rank 2 differential system $D_{B}$ on a 5-dimensional manifold $B$.  
\end{enumerate} 
The approach (i) is applicable to PDEs except involutive systems \cite{NS2}.  
On the other hand, the approach (ii) is a method specialized for 
involutive systems.\par 
We first mention the principle of the approach (i). 
We recall the decomposition (\ref{geometric decomposition}) of 
the rank 2 prolongations $(\Sigma(R),\hat D)$ of $(R,D)$. Here  
$(U_{\omega_{1}\omega_{2}}, \hat D)$ is the rank 2 prolongation 
with the independence condition $\omega_{1}\wedge \omega_{2}\not =0$. 
In general, for given second order regular overdetermined system 
$R=\left\{F=G=0\right\}$ with independent 
variables $x,y$, this prolongation corresponds to a 
third order PDE system which is obtained by partial derivation of $F=G=0$ 
for the two variables $x,y$. If we construct a solution of the system 
$(U_{\omega_{1}\omega_{2}}, \hat D)$, this solution $S$ is regular  
by the definition of $\Sigma_{0}$. On the other hand, 
if we construct a solution $S$ of $(\Sigma(R),\hat D)$ passing 
through $\Sigma_{1}=\left\{b=0\right\}\subset U_{\omega_{1}\pi}$, 
this solution $S$ is a singular solution of $(R,D)$ 
from the decomposition (\ref{geometric decomposition}). 
Thus, our strategy of this case is to find such a solution $S$ 
of $(\Sigma(R),\hat D)$. \par
We next mention the principle of the approach (ii). 
In fact, E. Cartan \cite{C1} 
characterized the overdetermined involutive systems $R$ 
by the condition that $R$ admits a 1-dimensional Cauchy characteristic system. Here, the Cauchy characteristic system $Ch(D)$ of a differential system $D$ on $R$ is defined by 
$$Ch(D)(x):=\left\{X\in D(x)\ |\ X\rfloor d\varpi_{i}\equiv 0\quad 
({\rm mod}\ \varpi_{0},\ \varpi_{1},\ \varpi_{2}) 
\quad {\rm for\ } i=0,1,2 \right\},$$
where, $\rfloor$ denotes the interior product (i.e., 
$X\rfloor d\varpi(Y)=d\varpi(X,Y)$.), and \\
$D=\left\{\varpi_{0}=\varpi_{1}=\varpi_{2}=0\right\}$ is defined locally by 
defining 1-forms $\left\{\varpi_{0},\ \varpi_{1},\ \varpi_{2}\right\}.$ 
From the expression (\ref{involutive-str-equ}) of the structure equation of an involutive system 
$(R,D)$, we obtain 
$Ch(D)=\left\{\varpi_{0}=\varpi_{1}=\varpi_{2}=\omega_{2}=\pi=0\right\}.$ 
This system $Ch(D)$ gives a 1-dimensional foliation. 
Hence, a leaf space $B:=R/Ch(D)$ is locally a 5-dimensional manifold. 
For this fibration $\pi^{R}_{B}:R\to B$, 
it is well-known that there exists a rank 2 differential system $D_{B}$ 
on the quotient space $B$ (\cite{C1}, \cite{Sat}, \cite{Y6}). 
Hence, if we construct integral curves of rank 2 differential system $(B,D_{B})$, we 
obtain integral surfaces $S$ of $(R,D)$ by using the fibration 
$\pi^{R}_{B}:R\to B$. Our strategy 
of this case is to find a singular solution among 
solutions obtained from such a technique. This principle is nothing but the 
theory of characteristic system. Namely, this approach is a theory of reduction 
into ordinary differential equations.\par  
As an application of the above discussion, we construct singular solutions of a typical equation based on the above two approach in the following subsection.     

\subsection{Singular solutions of Cartan's overdetermined system} 
We consider Cartan's overdetermined system 
$$R=\left\{r=\frac{t^3}{3},\ s=\frac{t^2}{2}\right\}.$$
The canonical system $D$ on $R$ is given by 
\begin{equation*}
\varpi_0= dz-pdx-qdy,\quad
\varpi_1= dp-\frac{t^3}{3}dx-\frac{t^2}{2}dy,\quad
\varpi_2= dq-\frac{t^2}{2}dx-tdy, 
\end{equation*}  
and the structure equation of $D$ is given by  
\begin{align}
d\varpi_{0}&\equiv dx\wedge dp+dy\wedge dq, 
\quad \quad {\rm mod}\ \varpi_{0} \nonumber \\
d\varpi_{1}&\equiv -t^2dt\wedge dx-tdt\wedge dy, \ 
{\quad \quad \rm mod\ } 
\varpi_{0}, \varpi_{1}, \varpi_{2},\\ 
d\varpi_{2}&\equiv -tdt\wedge dx-dt\wedge dy,
{\quad \quad \ \rm mod\ } 
\varpi_{0}, \varpi_{1}, \varpi_{2}. \nonumber
\end{align}
We take a new coframe 
$$\left\{\varpi_0,\ \hat\varpi_1:=\varpi_{1}-t\varpi_{2},\ \varpi_2, \pi:=dt,\ 
\omega_1:=dx,\ \omega_2:=tdx+dy\right\}.$$  
For this coframe, the above structure equation is rewritten as   
\begin{align}
d\varpi_{0}&\equiv \omega_{1}\wedge \hat\varpi_{1}+\omega_{2}\wedge \varpi_{2},
\quad \quad {\rm mod}\ \varpi_{0}, \nonumber \\
d\hat\varpi_{1}&\equiv  \ 0 \ \quad \quad \quad \quad \quad \quad
{\quad \quad \rm mod\ } 
\varpi_{0}, \hat\varpi_{1}, \varpi_{2},\label{Cartan-structure-equation} \\ 
d\varpi_{2}&\equiv \quad \quad \quad \quad \ 
 \omega_{2}\wedge \pi, 
{\quad \quad \ \rm mod\ } 
\varpi_{0}, \hat\varpi_{1}, \varpi_{2}. \nonumber
\end{align}
Hence this system $(R,D)$ is locally involutive.\par  
We first construct singular solutions of $(R,D)$ by using the approach (i). 
For this purpose, we need to prepare the rank 2 prolongation 
$(\Sigma(R), \hat D)$ of $(R,D)$ in terms of the Grassmann bundle 
$\pi:J(D,2)\to R$. For any open set $U\subset R$, $\pi^{-1}(U)$ is covered 
by 3 open sets in $J(D,2)$ such that 
$\pi^{-1}(U)=U_{xy}\cup U_{xt} \cup U_{yt},$ 
where 
\begin{align*}
U_{xy}:&=\left\{w\in \pi^{-1}(U)\ |\ dx|_{w}\wedge dy|_{w}\not=0 \right\},\quad 
U_{xt}:=\left\{w\in \pi^{-1}(U)\ |\ dx|_{w}\wedge dt|_{w}\not=0 \right\},\\
U_{yt}:&=\left\{w\in \pi^{-1}(U)\ |\ dy|_{w}\wedge dt|_{w}\not=0 \right\}. 
\end{align*}
We next explicitly describe the defining equation of $\Sigma(R)$ 
in terms of the inhomogeneous Grassmann coordinate of fibers in $U_{xy}, U_{xt}, U_{yt}$. 
First, we consider it in $U_{xy}$. 
For $w\in U_{xy}$, $w$ is a $2$-dimensional subspace of $D(v)$, 
where $p(w)=v$. 
Hence, by restricting $dt$ to $w$, we can introduce the 
inhomogeneous coordinate $p^{1}_{i}$ of fibers of $J(D,2)$ around $w$ with  
$dt|_{w}=p_{1}^{1}(w)dx|_{w}+p_{2}^{1}(w)dy|_{w}$. 
Moreover $w$ satisfies $d\varpi_{1}|_{w}\equiv d\varpi_{2}|_{w}\equiv 0$. 
Thus we have 
\begin{align*}
d\varpi_{1}|_{w}&\equiv 
\left(t^{2}p_{2}^{1}(w)-tp_{1}^{1}(w)\right) dx|_{w}\wedge dy|_{w}, \\ 
d\varpi_{2}|_{w}&\equiv 
\left(t p_{2}^{1}(w)-p_{1}^{1}(w)\right) dx|_{w}\wedge dy|_{w}.  
\end{align*} 
In this way, we obtain the defining equations $f=0$ of 
$\Sigma(R)$ in $U_{xy}$ of $J(D,2)$, where 
$f=p_{1}^{1}-tp_{2}^{1}$ Then $df$ does not vanish on $\left\{f=0\right\}$. 
Next we consider in $U_{xt}$. 
For $w\in U_{xt}$, $w$ is a $2$-dimensional subspace of $D(v)$, 
where $p(w)=v$. 
Hence, by restricting $dy$ to $w$, we can introduce the 
inhomogeneous coordinate $p^{2}_{i}$ of fibers of $J(D,2)$ around $w$ with  $dy|_{w}={p^{2}_{1}}(w)dx|_{w}+{p^{2}_{2}}(w)dt|_{w}$. 
Moreover $w$ satisfies $d\varpi_{1}|_{w}\equiv d\varpi_{2}|_{w}\equiv 0$. 
In this situation, 
it is sufficient to consider the condition 
$d\varpi_{2}|_{w}(\equiv 
\left( t+p_{1}^{2}(w)\right) dx|_{w}\wedge dt|_{w})\equiv 0$. 
Then, for the defining function $f=t+p_{1}^{2}$ of $\Sigma(R)$, 
$df$ does not vanish 
on $\Sigma(R)$. Finally, we consider in $U_{yt}$. 
For $w\in U_{yt}$, $w$ is a $2$-dimensional subspace of $D(v)$, where 
$p(w)=v$. Hence, by restricting $dx$ to $w$, we can introduce the 
inhomogeneous coordinate $p^{3}_{i}$ of fibers of 
$J(D,2)$ around $w$ with 
$dx|_{w}={p^{3}_{1}}(w)dy|_{w}+{p^{3}_{2}}(w)dt|_{w}$.  
Moreover $w$ satisfies $d\varpi_{1}|_{w}\equiv d\varpi_{2}|_{w}\equiv 0$. 
Here, $d\varpi_{2}|_{w}\equiv \left( 1+tp_{1}^{3}(w)\right) 
dy|_{w}\wedge dt|_{w}$. Then, for the defining function $f=1+tp_{1}^{3}$ of $\Sigma(R)$, $df$ does not vanish 
on $\Sigma(R)$. Therefore, we have the covering for the fibration $p:\Sigma(R)\to R$ such that $p^{-1}(U)=U_{xy}\cup U_{xt} \cup U_{yt}$.
However this covering is not essential. 
\begin{proposition}
Let $R$ be Cartan's overdetermined system and 
$U$ an open set on $R$. Then we have 
\begin{equation}\label{Cartan cover}
p^{-1}(U)=U_{xy}\cup U_{xt}.
\end{equation} 
\end{proposition}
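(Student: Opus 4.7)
The plan is to reduce the equality to the single set-theoretic inclusion $\Sigma(R) \cap U_{yt} \subseteq U_{xt}$. Once this is established, the tautological covering $p^{-1}(U) = U_{xy}\cup U_{xt} \cup U_{yt}$ noted in the paragraph just before the statement immediately collapses to the two-set covering claimed in the proposition. There is no reason to expect (nor need) any inclusion of $U_{yt}$ into $U_{xy}$, so the whole argument is about rearranging the already-computed data.

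To prove $\Sigma(R) \cap U_{yt} \subseteq U_{xt}$ I would simply invoke the explicit defining equation of $\Sigma(R)$ inside $U_{yt}$ that has already been derived above, namely $f = 1 + t\,p_1^3 = 0$. This equation forces $t\cdot p_1^3(w) = -1$ at every $w \in \Sigma(R)\cap U_{yt}$, hence $p_1^3(w)\neq 0$ (and, as a byproduct, $t(p(w)) \neq 0$, so $\Sigma(R) \cap U_{yt}$ is automatically disjoint from the hypersurface $\{t = 0\} \subset R$).

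The last step is to translate the non-vanishing of $p_1^3$ into membership in $U_{xt}$. Using the Grassmann parametrization $dx|_w = p_1^3(w)\,dy|_w + p_2^3(w)\,dt|_w$ valid on $U_{yt}$, one computes
\[
dx|_w \wedge dt|_w \;=\; p_1^3(w)\, dy|_w \wedge dt|_w,
\]
and since $dy|_w\wedge dt|_w \neq 0$ by the very definition of $U_{yt}$, the condition $p_1^3(w)\neq 0$ is equivalent to $w \in U_{xt}$. Combined with the previous step this gives the desired inclusion.

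The only subtlety I foresee is entirely notational: one must keep careful track of which of $p_1^3, p_2^3$ is the coefficient of $dy|_w$ and which is the coefficient of $dt|_w$, so that the wedge with $dt|_w$ correctly isolates $p_1^3$ (rather than $p_2^3$, which would have sent us into $U_{xy}$ instead). Beyond this bookkeeping, no further geometric input is required, and the proposition follows.
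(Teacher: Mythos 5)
Your proposal is correct and follows essentially the same route as the paper: both arguments use the defining equation $1+t\,p_1^3=0$ of $\Sigma(R)$ in $U_{yt}$ to see that $p_1^3=-1/t$ cannot vanish, whence $dx|_w\wedge dt|_w=p_1^3(w)\,dy|_w\wedge dt|_w\neq 0$ and so $\Sigma(R)\cap U_{yt}\subset U_{xt}$. The paper merely phrases this as a contradiction ($w\notin U_{xt}$ would force $1/t(w)=0$), while you argue directly; the computation is identical.
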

\begin{proof}
We show that $U_{yt}\subset U_{xt}$. Let $w$ be any point in $U_{yt}$. Here,  
if $w\not \in U_{xt}$, we have 
$$dx|_{w}\wedge dt|_{w}=-\frac{1}{t}(w)dy|_{w}\wedge dt|_{w}.$$ 
Hence we have the condition $1/t(w)=0$. However there does not exist 
such a point $w$. 
\end{proof}
We have the following description of the canonical system 
$\hat D$ of rank $3$: 
For $U_{xy}$,  
$\hat D=\left\{\varpi_{0}=\varpi_{1}=\varpi_{2}=\varpi_{t}=0\right\},$ 
where $\varpi_{t}=dt-ta dx-ady$ and $a$ is a fiber coordinate. 
For $U_{xt}$,  
$\hat D=\left\{\varpi_{0}=\varpi_{1}=\varpi_{2}=\varpi_{y}=0\right\},$ 
where $\varpi_{y}=dy+tdx-bdt$ and $b$ 
is a fiber coordinate. The decomposition $\Sigma(R)=\Sigma_{0}\cup\Sigma_{1}$ is given by $\Sigma_{0}|_{p^{-1}(U)}=U_{xy}$, 
$\Sigma_{1}|_{p^{-1}(U)}=U_{xt} \backslash U_{xy}$, respectively.\par 
By using the approach (i), we construct the geometric singular solutions 
of $(\Sigma(R), \hat D)$ passing through $\Sigma_{1}$.   
Let $\iota:S \hookrightarrow U_{xt}$ 
be a graph defined by 
$$(x,\ y(x,t),\ z(x,t),\ p(x,t),\ q(x,t),\ t,\ b(x,t)).$$  
If $S$ is an integral submanifold of $(U_{xt}, \hat D)$, 
then the following conditions are satisfied:  
\begin{align}
\iota^{*}\varpi_{0}=&(z_x-p-qy_x)dx+(z_t-qy_t)dt=0, \label{1CartanODS1} \\ 
\iota^{*}\varpi_{1}=&\left( p_x-\frac{t^3}{3}-\frac{t^2}{2}y_{x} \right)dx + 
                    \left( p_{t}-\frac{t^2}{2}y_{t} \right) dt=0, \label{1CartanODS2} \\
\iota^{*}\varpi_{2}=&\left (q_x-\frac{t^2}{2}-ty_x \right) dx+ (q_t-ty_t)dt=0, \label{1CartanODS3}\\
\iota^{*}\varpi_{y}=&(y_{x}+t)dx+(y_t-b)dt=0. \label{1CartanODS4} 
\end{align}
From these conditions, we have 
\begin{align}
& z_x-p+qt=0,\ z_t-bq=0, \label{2CartanODS1} \\
& p_x+\frac{t^3}{6}=0,\  p_{t}-\frac{bt^2}{2}=0, \label{2CartanODS2} \\ 
& q_x+\frac{t^2}{2}=0,\  q_{t}-bt=0, \label{2CartanODS3}\\  
& y_{x}+t=0,\ y_t-b=0. \label{2CartanODS4} 
\end{align}
We have $y=-tx+y_{0}(t)$ from $(\ref{2CartanODS4})$. Note that 
the condition passing through $\Sigma_{1}$ is $y_t(0,0)=y^{\prime}_{0}(0)=0$. 
From $(\ref{2CartanODS3})$, we have 
$q=-t^2x/2+ty_{0}(t)-\int y_{0}(t)dt$. 
From $(\ref{2CartanODS2})$, we have 
$p=-t^3x/6+t^{2}y_{0}(t)/2-\int t y_{0}(t)dt$. 
From $(\ref{2CartanODS1})$, we have 
\begin{equation*}
\frac{x^{2}t^{3}}{6}-x\left\{ \frac{t^2y_{0}(t)}{2}+\int ty_{0}(t)dt-t\int y_{0}(t)dt \right\}
+\frac{t y_{0}^{2}(t)}{2}+\frac{1}{2}\int y_{0}^{2}(t)dt-y_{0}(t)\int y_{0}(t)dt. 
\end{equation*}
Consequently, we obtain the singular solutions of the form  
\begin{align}
( &x,\quad -xt+y_{0}(t), \nonumber \\
& \frac{x^{2}t^{3}}{6}-x\left\{ \frac{t^2y_{0}(t)}{2}+\int ty_{0}(t)dt-t\int y_{0}(t)dt \right\}
+\frac{ty_{0}^{2}(t)}{2}+\frac{1}{2}\int y_{0}^{2}(t)dt-y_{0}(t)\int y_{0}(t)dt, \label{Cartan-singular1}  \\
& -t^3x/6+t^{2}y_{0}(t)/2-\int t y_{0}(t)dt,\quad -t^2x/2+ty_{0}(t)-\int y_{0}(t)dt,\quad t,\quad -x+y^{\prime}_{0}(t)), \nonumber 
\end{align}
where $y_{0}(t)$ is a function on $S$ depending only $t$ 
and which satisfies $y^{\prime}_{0}(0)=0$. 
From this condition $y^{\prime}_{0}(0)=0$, these solutions 
have singularities at the origin in $J^1(\mathbb R^2, \mathbb R)$. 

We next construct the singular solutions by using the approach (ii). 
From the structure equation (\ref{Cartan-structure-equation}), we have 
\begin{align*}
Ch(D)&=\left\{\varpi_{0}=\hat\varpi_{1}=\varpi_{2}
=\omega_{2}=\pi=0\right\},\\
&={\rm span}\left\{\frac{\partial}{\partial x}-t\frac{\partial}{\partial y}
+(p-tq)\frac{\partial}{\partial z}-\frac{t^3}{6}\frac{\partial}{\partial p}
-\frac{t^2}{2}\frac{\partial}{\partial q}\right\}.
\end{align*}   
Hence we have a local coordinate $(x_1,x_2,x_3,x_4,x_5)$ on the 
leaf space $B:=R/Ch(D)$ given by   
\begin{align*} 
x_1:&=z-xp+xqt+\frac{1}{6}x^2t^3,\quad 
x_2:=p-qt+\frac{1}{2}yt^2+\frac{1}{6}t^3x,\\
x_3:&=-q+\frac{1}{2}yt,\quad x_4:=y+xt,\quad x_5:=-t.
\end{align*}  
Conversely, $R$ is locally a $\mathbb R$-bundle on $B$. 
If we take a coordinate function $\lambda$ of the fiber $\mathbb R$,   
then the coordinate $(x,y,z,p,q,t)$ is expressed in  
terms of the coordinate \\ $(x_1,x_2,x_3,x_4,x_5,\lambda)$ defined by 
\begin{align} 
x&=\lambda,\quad y=x_4+\lambda x_5,\quad  
z=x_1+\lambda x_2-\frac{1}{2}\lambda x_4(x_5)^2-\frac{1}{6}\lambda^{2} (x_5)^3, \label{G_2fibration} \\
p&=x_2+x_3x_5+\frac{1}{6}\lambda (x_5)^3,\quad q=-x_3-\frac{1}{2}x_{4}x_{5}
-\frac{1}{2}\lambda (x_5)^2,\quad   
t=-x_5. \nonumber
\end{align} 
On the base space $B$, we consider a rank 2 differential system  
$D_B=\left\{\alpha_1=\alpha_2=\alpha_3=0\right\}$ given by 
\begin{align*}
\alpha_1 &=dx_1+\left(x_3+\frac{1}{2}x_4x_5\right) dx_4,\quad
\alpha_2 =dx_2+\left(x_3-\frac{1}{2}x_4x_5\right) dx_5,\\ 
\alpha_3 &=dx_3+\frac{1}{2}\left(x_4dx_5-x_5dx_4\right). 
\end{align*} 
It is well-known that this system $D_B$ is a flat model 
of (2,3,5)-distributions \cite{Y4}. Indeed, we can check this 
fact by calculating derived systems. Moreover, it is also 
well-known that $D_B$ has 
infinitesimal automorphism $G_2$ (\cite{C1},\ \cite{Y5}). 
For the projection 
$p:R \to B$, generator 1-forms of $D$ and $D_B$
are related as follows: 
\begin{equation*}
\varpi_0:=p^{*}\alpha_1+x p^{*}\alpha_ 2,\quad
\varpi_1:=p^{*}\alpha_2-xp^{*}\alpha_ 3,\quad
\varpi_2:=-p^{*}\alpha_3. 
\end{equation*}
Thus, $(B, D_B)$ is a retracting space 
of $(R, D)$, that is, $(B, D_B)=(p(R), p_{*}D)$. 
By using this correspondence, E. Cartan obtained the explicit 
description of regular solutions of $(R, D)$ 
which are constructed by solution 
curves of $(B,D_B)$ (cf. \cite{C1}, \cite{Sat}). 
In contrast to this result, we construct anew singular solutions in the following. 
We consider integral curves $c(\tau)$ of $D_B$ given by  
\begin{align}
x_1&=\int \left\{\varphi^{\prime}\int \varphi d\tau-\varphi\varphi^{\prime} \tau \right\}d\tau,\quad     
x_2=\int \left\{\int \varphi d\tau \right\}d\tau,\nonumber \\  
x_3&=-\frac{1}{2}\int(\varphi-\tau \varphi^{\prime})d\tau,\quad  x_4=\varphi(\tau),\quad x_5=\tau. \label{integral curve}
\end{align} 
where $\tau$ is a parameter of curves, and $\varphi(\tau)$ is 
an arbitrary smooth function of $\tau$. Here, we assume the condition 
$\varphi^{\prime}(0)=0$ to consider singular solutions which have singularities 
at the origin in $J^1(\mathbb R^2, \mathbb R)$. 
Then, from relations (\ref{G_2fibration}) and (\ref{integral curve}),   
we obtain the following singular solutions: 
\begin{align}
( &x,\quad -xt+\varphi(-t),\  
 \frac{x^{2}t^{3}}{6}-x\left\{ \frac{t^2\varphi(-t)}{2}
+\int t\varphi(-t)dt-t\int \varphi(-t)dt \right\}\nonumber \\ &
+\frac{t\varphi^{2}(-t)}{2}+\frac{1}{2}\int \varphi^{2}(-t)dt-\varphi(-t)\int \varphi(-t)dt,\\
& -\frac{t^3x}{6}+\frac{t^{2}\varphi(-t)}{2}-\int t \varphi(-t)dt,\quad 
-\frac{t^2x}{2}+t\varphi(-t)-\int \varphi(-t)dt,\quad t). \nonumber 
\end{align}
These singular solutions are equal to the singular solutions (\ref{Cartan-singular1}) 
obtained by the approach (i).  

{\bf Acknowledgment} \ 
The author would like to thank 
Kazuhiro Shibuya for helpful discussions. 
He also would like to thank Professor   
Keizo Yamaguchi for encouragement and useful advice.    
The author is also supported by 
Osaka City University University Advanced Mathematical 
Institute and the JSPS Institutional Program for 
Young Researcher Overseas Visits (visiting Utah-State University).
\vspace{1cm}

\end{document}